\let\csname equation*\endcsname\relax
\let\csname endequation*\endcsname\relax
\newtheorem{lem}{Lemma}[section]
\newtheorem{pro}[lem]{Proposition}
\newtheorem{thm}[lem]{Theorem}
\newtheorem{rem}{Remark}
\newcommand\hdim{\dim_{\mathcal H}}
\newcommand\pdim{\dim_{\mathcal P}}
\def\L{\mathscr{ L}}
\def\F{\mathcal F}
\def\C{\mathcal C}
\def\G{\mathcal G}
\def\H{\mathcal H}
\def\A{\mathcal A}
\def\K{\mathscr{K} }
\def\R{\mathbb R}
\def\Z{\mathbb Z}
\def\N{\mathbb N}
\def\Q{\mathbb Q}
\def\x{\textbf x}
\def\y{\textbf y}
\def\z{\textbf z}
\begin{document}

\title[Metric results of  the intersection of sets  in Diophantine approximation]{Metric results of  the intersection of sets  in Diophantine approximation }

\author{Chen Tian$^1$ and Liuqing Peng$^2$ }

\vspace{16pt plus3pt minus3pt}

\address{$^1$School of Statistics and Mathematics, Hubei University of Economics, Wu Han, Hubei 430205, People's Republic of China\\$^2$School of Mathematics and Statistics, Hubei University of Education, Wu Han, Hubei 430205, People's Republic of China}

\vspace{16pt plus3pt minus3pt}

\ead{tchen@hbue.edu.cn and liuqingpeng@hue.edu.cn}
\let\thefootnote\relax\footnotetext{This work was supported by NSFC 12171172.}

\begin{abstract}
Let $\psi : \R_{>0}\rightarrow \R_{>0}$ be a non-increasing function. Denote by $W(\psi)$  the set of $\psi$-well-approximable points and by $E(\psi)$ the set of points $x\in[0,1]$ such that for any $0 < \epsilon < 1$  there exist infinitely many $(p,q)\in\Z\times\N $  with
$$\left(1-\epsilon\right)\psi(q)< \left| x-\frac{p}{q}\right|< \psi(q) .$$ In this paper, we investigate  the metric properties of the set $E(\psi).$ Specifically, we compute the $s$-dimensional Hausdorff measure $\H^s(E(\psi))$  of $E(\psi)$ for a large class of  $s \in (0,1].$  Additionally, we establish that
$$\hdim E(\psi_1) \times \cdots \times E(\psi_n) =\min \{ \hdim E(\psi_i)+n-1: 1\le i \le n  \},$$ where   $\psi_i:\R_{> 0}\rightarrow \R_{> 0} $ is a non-increasing function satisfying $\psi_i(x)=o(x^{-2}) $ for $1\le i \le n.$
\end{abstract}

\begin{indented}
\item[]Keywords: Approximation by
rational numbers, Hausdorff measure, Hausdorff dimension, Fractals
\end{indented}

\begin{indented}
\item[]Mathematics Subject Classification numbers: 11J83,  28A80
\end{indented}

\vspace{16pt plus3pt minus3pt}

%
%
\section{Introduction}

In this paper, we focus on determining the metric properties of  the intersection of  sets in  Diophantine  approximation. Our motivation stems from classical Diophantine approximation.
Let $\psi : \R_{>0}\rightarrow \R_{>0}$ be a non-increasing function such that $x\mapsto x^2\psi(x)$ is non-increasing, and consider the set of $\psi$-well-approximable points
$$W(\psi)=\left\{x\in [0,1]: \left| x-\frac{p}{q}\right|< \psi(q),\ \text{i.m.}\ (p,q)\in\Z\times\N \right\} ,$$ where {\em i.m.~}denotes {\em infinitely many} for brevity.

 Khintchine \cite{K24} proved that $W(\psi)$ is a Lebesgue  null set if and only if
$\sum\limits_{n=1}^{\infty}n\psi(n) < \infty.$ Subsequently, Jarn\'{\i}k \cite{J29} and independently Besicovitch  \cite{Bes34} showed that for $\tau\ge 2$
$$\hdim W(x \mapsto x^{-\tau})=\frac{2}{\tau} ,  $$
here and hereafter $\hdim$ denotes the Hausdorff dimension. Furthermore, Jarn\'{\i}k \cite{J31} established the following $s$-dimensional Hausdorff measure  $\H^s( W(\psi))$ of the set $W(\psi)$.
\begin{thm}[Jarn\'{\i}k \cite{J31}]\label{thmj}Let $\psi:\R_{> 0}\rightarrow \R_{> 0} $ be such that $x\mapsto x^2\psi(x)$ is non-increasing and  $0<s<1.$ Assume that the sum $\sum\limits_{n= 1}^\infty n\psi(n)$ converges and that the function $x\mapsto x^2\psi^s(x)$ is non-increasing. Then,
$$   \H^s(W(\psi))=\left\{\begin{array}{ll}
    0 &\text{if}~\sum\limits_{n=1}^\infty n\psi^s(n) < \infty; \\
    \infty &\text{if}~\sum\limits_{n=1}^\infty  n\psi^s(n) =\infty .
    \end{array}
  \right.$$
\end{thm}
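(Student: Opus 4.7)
The plan is to split the proof into the convergence case ($\H^s(W(\psi)) = 0$) and the divergence case ($\H^s(W(\psi)) = \infty$), following the standard dichotomy in metric Diophantine approximation.

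For the convergence direction, I would use the natural cover of $W(\psi)$ by the intervals around rationals appearing in the defining condition. For each $N \geq 1$,
$$W(\psi) \subseteq \bigcup_{q \geq N}\, \bigcup_{0 \leq p \leq q} B\!\left(\tfrac{p}{q},\, \psi(q)\right),$$
so the $s$-dimensional Hausdorff pre-measure is bounded above by $\sum_{q\geq N}(q+1)(2\psi(q))^s \leq C \sum_{q\geq N} q\,\psi(q)^s$, which tends to zero as $N \to \infty$ under the hypothesis $\sum n\psi(n)^s < \infty$. Hence $\H^s(W(\psi)) = 0$.

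The substantive part is the divergence case. I would construct a Cantor-type subset $K \subseteq W(\psi)$ together with a probability measure $\mu$ supported on $K$ satisfying a Frostman-type estimate $\mu(B(x,r)) \leq C r^s$. The mass distribution principle then yields $\H^s(K) > 0$, and running the same construction inside every subinterval would upgrade this to $\H^s(W(\psi)) = \infty$. The construction proceeds in stages indexed by a rapidly growing sequence $Q_1 < Q_2 < \cdots$ chosen so that $\sum_{Q_k \leq q < Q_{k+1}} q\psi(q)^s$ is uniformly comparable to $1$; the monotonicity of $x^2\psi^s(x)$ is needed here to convert the divergence of $\sum n\psi(n)^s$ into a controlled block decomposition. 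At stage $k$, I select a large collection of rationals $p/q$ with $Q_k \leq q < Q_{k+1}$ lying inside previously chosen stage-$(k-1)$ intervals, mutually separated via the elementary bound $|p/q - p'/q'| \geq 1/(qq')$, and surround each by an interval of radius $\psi(q)$; the measure $\mu$ distributes mass uniformly across these intervals at each level.

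The main obstacle will be verifying the Frostman bound $\mu(B(x,r)) \leq C r^s$ uniformly at all scales $r$. For $r$ between two consecutive stage radii one must count how many stage-$k$ intervals can meet $B(x,r)$ and balance this against their assigned mass; here the assumption that $x \mapsto x^2\psi^s(x)$ is non-increasing is exactly what allows the resulting local-density estimate to telescope across scales. This is essentially the original Jarn\'{\i}k Cantor-set construction, and it can be packaged more cleanly via the Beresnevich--Dickinson--Velani ubiquity framework, which reduces matters to verifying that the family $\{p/q\}$ forms a ubiquitous system with respect to the approximating function $\psi$.
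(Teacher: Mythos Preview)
The paper does not actually prove this statement: Theorem~\ref{thmj} is quoted as Jarn\'{\i}k's classical result from \cite{J31} and used as background, with no proof supplied. So there is no ``paper's own proof'' to compare against.

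That said, your outline is the standard and correct approach to Jarn\'{\i}k's theorem, and it closely mirrors the method the paper uses for its own Theorem~\ref{thmmeasure} on the set $E(\psi)$. In Section~\ref{section1} the authors handle the convergence part of Theorem~\ref{thmmeasure} by appealing directly to Theorem~\ref{thmj} (remarking that ``this is a natural covering argument'', exactly as you describe), and for the divergence part they build a Cantor subset $K_\eta$ via Lemma~\ref{lem2}, define a mass distribution on it, and verify a Frostman bound $\nu(B(x,r)) \ll r^s/\eta$ to conclude $\H^s(E(\psi)) = \infty$ via Proposition~\ref{p1}. Their block decomposition in the proof of Lemma~\ref{lem2} uses the monotonicity of $x \mapsto x^2\psi^s(x)$ in precisely the way you anticipate. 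The main structural difference is that their construction introduces multiple ``sub-levels'' at each stage (indexed by $l$) with a parameter $\eta$ that can be made arbitrarily large, which gives $\H^s = \infty$ directly rather than first proving positivity and then upgrading via density; your proposal to run the construction in every subinterval achieves the same end by a slightly different route.
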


In the same paper, Jarn\'{\i}k \cite{J31} confirmed that if $\psi(x)=o(x^{-2})$ then the exact set
 $$\text{Exact}(\psi):=W(\psi)\setminus \bigcup_{0<c<1}W(c\psi) $$
is non-empty. This result was later improved by Bugeaud \cite{B03}, who showed that provided that  the function $x\mapsto x^2\psi(x)$ is non-increasing and $\sum_{n=1}^\infty n\psi(n)<\infty ,$ then
\begin{align}\label{result1}\hdim \text{Exact}(\psi) = \hdim W(\psi) = \frac{2}{\lambda},\end{align}  where $ \lambda= \liminf \limits_{x\rightarrow \infty} \frac{-\log \psi (x)}{ \log x}  .$  Additionally, Bugeaud and Moreira \cite{BM11} proved that the condition can be relaxed to the requirement that $\psi:\R_{> 0}\rightarrow \R_{> 0} $ is a non-increasing function and satisfies $\psi(x)=o(x^{-2}) .$  In \cite{FW22},   Fraser and Wheeler calculated the Fourier dimension of the set $ \text{Exact}(\psi).$ For more results, we refer the reader to \cite{BGN23,BS11,BDV01,B08,M18,S23}.

In this paper, we are interested in a slightly larger set
\begin{align*}E(\psi):=\bigcap_{0<\epsilon<1} \left\{x\in [0,1]: \left(1-\epsilon\right)\psi(q)< \left| x-\frac{p}{q}\right|< \psi(q),\ \text{i.m.}\ (p,q)\in\Z\times\N  \right  \}  ,\end{align*}   which can also be regarded as a type of ``exact"  set. Recall that the set $\text{Exact}(\psi)$  consists of real numbers $x\in[0,1]$ such that
$$ |x-p/q|\le \psi(q) \quad \quad \text {infinitely often} $$
and
$$ |x-p/q|\ge c\psi(q) \quad \quad \text {for any}\ c<1\ \text{and} \ \text{any} \ q\ge q_0(c,x) .$$ Obviously,
$$\text{Exact}(\psi) \subset E(\psi) \subset W(\psi)  .$$ Thus, it follows from \eqref{result1} that
$$\hdim  E(\psi) = \hdim \text{Exact}(\psi) =\hdim W(\psi) = \frac{2}{\lambda}, \quad  \text{where } \lambda= \liminf \limits_{x\rightarrow \infty} \frac{-\log \psi (x)}{ \log x}.  $$ Since the Hausdorff measure of $ \text{Exact}(\psi)$ remains unknown, we are unable to get the result of the Hausdorff measure of $E(\psi)$ as we do for the results with the Hausdorff dimension.   This raises the natural question: What is the Hausdorff measure of the set $E(\psi)$? Is it the same as the Hausdorff measure of $W(\psi)$? The following theorem  provides answers to these questions.


\begin{thm}\label{thmmeasure}Let $\psi:\R_{>0}\rightarrow \R_{>0}$ and $0<s\le 1.$ Assume that the function $x\mapsto x^2\psi^s(x)$ is non-increasing and tends to $0$ as $x$ goes to infinity. Then
$$   \H^s(E(\psi))=\left\{\begin{array}{ll}
    0 &\text{if}~\sum\limits_{n=1}^\infty n\psi^s(n) < \infty; \\
    \H^s((0,1)) &\text{if}~\sum\limits_{n=1}^\infty  n\psi^s(n) =\infty.
    \end{array}
  \right.$$
\end{thm}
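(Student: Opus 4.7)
This reduces to Theorem~\ref{thmj} via the inclusion $E(\psi)\subseteq W(\psi)$. To apply Theorem~\ref{thmj} I must verify that $x\mapsto x^2\psi(x)$ is non-increasing and that $\sum_n n\psi(n)<\infty$. The first follows from the factorization $x^2\psi(x)=(x^2\psi^s(x))^{1/s}\cdot x^{2-2/s}$, a product of two non-increasing factors since $s\le 1$; the second follows because $x^2\psi^s(x)\to 0$ forces $\psi\le 1$ eventually, and then $\psi\le\psi^s$ turns $\sum n\psi^s(n)<\infty$ into $\sum n\psi(n)<\infty$. Theorem~\ref{thmj} then yields $\H^s(W(\psi))=0$, and hence $\H^s(E(\psi))=0$.

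\textbf{Plan for the divergence direction.} I will split on $s$. When $s=1$ the target is $\H^1(E(\psi))=1$: for each $\epsilon\in(0,1)$ the set $A(\epsilon):=\{x\in[0,1]:(1-\epsilon)\psi(q)<|x-p/q|<\psi(q)\text{ i.m.}\}$ is the lim sup of shells whose total Lebesgue length is $2\epsilon\sum_q q\psi(q)=\infty$, so a divergence Borel--Cantelli argument (the one underlying Khintchine's theorem for monotone $\psi$) gives $\H^1(A(\epsilon))=1$; countable additivity along $\epsilon=1/n$ then yields $\H^1(E(\psi))=1$. When $s<1$ the target $\H^s(E(\psi))=\infty$ will follow if $\H^s(E(\psi)\cap(a,b))>0$ for every subinterval $(a,b)\subset(0,1)$.

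For the latter I will build a Cantor-like set $K\subset E(\psi)\cap(a,b)$ whose atoms are half-shells
\[
J_{p,q,\epsilon}\;:=\;\bigl(p/q+(1-\epsilon)\psi(q),\,p/q+\psi(q)\bigr),
\]
each of length $\epsilon\psi(q)$. Choose a rapidly growing sequence $N_k\uparrow\infty$ via a dyadic block argument so that $\sum_{q\in[N_k,2N_k]}q\psi^s(q)\ge 1$ for every $k$ (possible because $\sum n\psi^s(n)=\infty$ and $x^2\psi^s(x)$ is monotone), and a sequence $\epsilon_k\downarrow 0$ decaying slowly. Define $K_0\supset K_1\supset\cdots$ by packing each basic interval of $K_{k-1}$ with pairwise disjoint $J_{p,q,\epsilon_k}$ ($q\in[N_k,2N_k]$, $\gcd(p,q)=1$), and set $K=\bigcap_k K_k$; disjointness is automatic since $\psi(q)=o(q^{-2/s})\ll q^{-1}$. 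By construction, any $x\in K$ satisfies $(1-\epsilon_k)\psi(q_k)<|x-p_k/q_k|<\psi(q_k)$ at every level, and since $\epsilon_k\to 0$ and $q_k\to\infty$ the point $x$ lies in $A(\epsilon)$ for every $\epsilon>0$, hence in $E(\psi)$. A Frostman/mass-distribution argument on $K$, patterned on Jarn\'ik's proof of Theorem~\ref{thmj} but with $\psi$-neighborhoods replaced by the shells $J_{p,q,\epsilon_k}$, then yields $\H^s(K)>0$.

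\textbf{Main obstacle.} The delicate point is the Frostman estimate: $\epsilon_k$ must decay slowly enough that the shell widths $\epsilon_k\psi(q)$ preserve the mass bound $\mu(B(x,r))\le C r^s$ at scales between the stage-$k$ and stage-$(k+1)$ diameters, yet must still tend to $0$ to force $K\subset E(\psi)$. The monotonicity of $x\mapsto x^2\psi^s(x)$ is exactly what enables this balance, as it controls both the lower bound on $\sum_{q\in[N_k,2N_k]}q\psi^s(q)$ (via dyadic blocks) and the number of coprime rationals with $q\in[N_k,2N_k]$ fitting into a parent interval, mirroring the role this hypothesis plays in the classical Jarn\'ik--Besicovitch construction.
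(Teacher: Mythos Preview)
Your convergence argument is fine, and the $s=1$ divergence via Borel--Cantelli on annular shells is workable (quasi-independence of the shells inherits from that of the containing balls with a harmless $\epsilon^{-2}$ loss), though the paper instead proves full Lebesgue measure by a density-point argument.

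The $s<1$ divergence case has real gaps. First, the block condition $\sum_{q\in[N_k,2N_k]} q\psi^s(q)\ge 1$ cannot hold for large $k$: since $x^2\psi^s(x)\to 0$, each such sum is $\asymp N_k^2\psi^s(N_k)\to 0$; divergence of $\sum n\psi^s(n)$ only ensures that a \emph{union of many} blocks is large. Second, the implication ``$\H^s(E(\psi)\cap(a,b))>0$ for all $(a,b)$ $\Rightarrow$ $\H^s(E(\psi))=\infty$'' is false as stated: a dense countable union of shrinking affine copies of a fixed $s$-set has positive $\H^s$-measure on every subinterval yet finite total measure. Third, and most substantively, a single layer of children per parent cannot deliver the Frostman bound $\mu(B(x,r))\ll r^s$. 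Packing estimates of Jarn\'{\i}k type give $\sum_{M}|M|^s\asymp |L|$ (not $|L|^s$) for the children $M$ of a parent $L$, so the recursion $\mu(M)\asymp\mu(L)\,|M|^s/|L|$ accumulates a factor $|L|^{s-1}>1$ at every level. The paper's fix is structural: inside each parent $B$ it lays down $l_B\asymp |B|^{s-1}$ disjoint \emph{sub-levels} of children (and $l_{B_0}\asymp\eta$ at the top, for an arbitrary parameter $\eta$), so that $\sum_M|M|^s\asymp |B|^s$ and $\nu(L)\ll |L|^s/\eta$; letting $\eta\to\infty$ then gives $\H^s(E(\psi))=\infty$. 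Your ``main obstacle'' misidentifies the difficulty: the decay of $\epsilon_k$ is essentially free (the paper takes $\epsilon_k=2^{-k}$ and runs the Frostman estimate at the coarser scale $|L|^s$ rather than $|L|$); the missing ingredient is this multi-sub-level architecture.
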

\begin{rem}
The set $\text{Exact}(\psi)$ is  closely related to
the set of $\psi$-badly  approximable points,  defined as
  $$\text{Bad}(\psi)=W(\psi) \setminus \bigcap_{c>0}W(c\psi) .$$
We have the inclusion
   $\text{Exact}(\psi) \subset \text{Bad}(\psi) \subset W(\psi).$ More related results can be found in \cite{Ba23,Ko24,S25}.
\end{rem}


The above theorem implies that the set $E(\psi)$  is of nice structure. Then, it will be a rewarding task to find other fine features enjoyed by $E(\psi)$ beyond its own metric theory. Inspired  by a simple observation of Erd\"{o}s \cite{E62} regarding the dimension of the Cartesian product of the set of Liouville numbers, we discover another nice property of $E(\psi),$ i.e. the Hausdorff dimension of its Cartesian product.

More specifically, let
$$\mathfrak{L} :=\bigcap_{\tau>2}\left\{x\in \R: \left|x-\frac{p}{q}\right|<\frac{1}{q^{\tau}}, \ \text{i.m.} \ (p,q)\in\Z\times\N \right\}. $$ In terms of Hausdorff dimension and gauge function, the set of Liouville numbers is relatively small. For instance, $$\hdim \mathfrak{L}=0 , $$ as discussed in \cite{BDK05,Ol05,OR06}. However,  Erd\"{o}s \cite{E62} discovered that each real number can be expressed as the sum of two Liouville numbers. In other words, one has $\R=f(\mathfrak{L} \times \mathfrak{L})$ where $f: \mathfrak{L} \times \mathfrak{L}\rightarrow \R$ is  defined as $f(x,y)=x+y.$ Since $f$ is Lipschitz,  it follows from a fundamental property of Hausdorff dimension that
$$\hdim(\mathfrak{L} \times \mathfrak{L} )  \ge \hdim f(\mathfrak{L} \times \mathfrak{L}) =\hdim \R=1. $$ Similarly, we investigate the Hausdorff dimension of the Cartesian product of $E(\psi).$

Let us recall the well-known results on  Cartesian product sets provided  by Marstrand \cite{Mar54} and Tricot \cite{Tri82}.

\begin{thm}[Marstrand \cite{Mar54};Tricot \cite{Tri82}]\label{thm2} Let $A,~B$ be two Borel measurable sets in $\R^d$ and $\R^m.$  Denote the packing dimension by $\pdim.$ Then
$$ \hdim A+\hdim B\leq \hdim (A\times B)\leq \hdim A+\pdim B.$$
 If $\hdim B =\pdim B,$ then
\begin{equation}\label{des1}  \hdim (A\times B) =\hdim A+\hdim B.\end{equation}
\end{thm}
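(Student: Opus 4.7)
The plan is to handle the two inequalities of the double bound separately, using potential-theoretic tools for the lower bound and covering techniques for the upper bound; the equality \eqref{des1} is then an immediate corollary in view of the general relation $\hdim B\le \pdim B$.

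For the lower bound $\hdim A+\hdim B\le \hdim(A\times B)$, I would use Frostman's lemma. Pick $s<\hdim A$ and $t<\hdim B$. Frostman's lemma produces probability measures $\mu$ and $\nu$ supported on compact subsets of $A$ and $B$ respectively such that, for every ball of radius $r$, its $\mu$-measure is at most $Cr^s$ and its $\nu$-measure is at most $Cr^t$. The product measure $\mu\otimes\nu$ lives on $A\times B$, and splitting the domain of integration according to which of $|x-x'|$ and $|y-y'|$ dominates and then applying Fubini's theorem together with the Frostman bounds shows that the $(s+t)$-energy
$$\iint \frac{d(\mu\otimes\nu)(x,y)\, d(\mu\otimes\nu)(x',y')}{|(x,y)-(x',y')|^{s+t}}$$
is finite. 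Finite energy forces $\hdim(A\times B)\ge s+t$; letting $s\uparrow\hdim A$ and $t\uparrow\hdim B$ completes the lower bound.

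For the upper bound $\hdim(A\times B)\le \hdim A+\pdim B$, I would invoke Tricot's characterization $\pdim B=\inf\{\sup_i \overline{\dim}_B B_i : B=\bigcup_{i=1}^\infty B_i\}$. Fix $s>\hdim A$ and $t>\pdim B$, and decompose $B=\bigcup_i B_i$ with $\overline{\dim}_B B_i<t$ for every $i$. Since Hausdorff dimension is countably stable it suffices to establish $\hdim(A\times B_i)\le s+t$ for each $i$. Given $\varepsilon>0$, pick a cover $\{U_j\}$ of $A$ with diameters $r_j$ satisfying $\sum_j r_j^s<\varepsilon$, and group the indices into dyadic classes $J_k:=\{j : r_j\in(2^{-k-1},2^{-k}]\}$. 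For $k$ sufficiently large the upper box-counting bound for $B_i$ furnishes a cover of $B_i$ by at most $2^{kt}$ sets of diameter at most $2^{-k}$. Taking Cartesian products of the $U_j$ with $j\in J_k$ against these $B_i$-covers yields a cover of $A\times B_i$ whose $(s+t)$-sum is bounded by a constant multiple of $\sum_j r_j^s<C\varepsilon$; sending $\varepsilon\to 0$ gives $\H^{s+t}(A\times B_i)=0$.

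The hard part is the upper bound: one has to reconcile a variable-scale Hausdorff-efficient cover of $A$ with fixed-scale box covers of $B_i$. The dyadic grouping is the standard device, but the bookkeeping is delicate---one must discard or absorb the contribution of the finitely many ``bad'' scales for which the box-counting estimate has not yet kicked in, and then confirm that the sum over the remaining dyadic levels is uniformly controlled by $\sum_j r_j^s$ independently of the fine structure of $\{U_j\}$.
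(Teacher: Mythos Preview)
Your argument is the standard textbook proof (Frostman/energy for the lower bound, Tricot's box-dimension decomposition plus dyadic grouping for the upper bound) and is essentially correct as sketched. However, there is nothing to compare against: the paper does not prove Theorem~\ref{thm2}. It is quoted as a classical result of Marstrand and Tricot, with references \cite{Mar54,Tri82}, and is used only as background to explain why the product formula for $E(\psi_1)\times\cdots\times E(\psi_n)$ does not follow from general theory (since $\pdim E(\psi_i)=1>\hdim E(\psi_i)$). The paper's own work lies in Theorems~\ref{thmmeasure} and~\ref{mainthm}.
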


The above theorem implies that if one of $A$, $B$ satisfies its Hausdorff dimension equal to its  packing dimension, for example, self-similar sets, then the equality \eqref{des1} holds. But $E(\psi)$ is dense in $[0,1]$, so $\hdim E(\psi)<\pdim E(\psi)=1.$ Thus, Theorem \ref{thm2} cannot give an exact dimension of the product of $E(\psi)$. To handle this, we establish the following theorem.

\begin{thm}\label{mainthm}For each $1\leq i \leq n,$ let $\psi_i:\R_{> 0}\rightarrow \R_{> 0} $ be a non-increasing function satisfying $\psi_i(x)=o(x^{-2}) .$ Then we have
$$\hdim E(\psi_1) \times \cdots \times E(\psi_n)=\min_{1\leq i\leq n} \big\{ n-1+ \hdim E(\psi_i)\big\}=\min_{1\leq i\leq n} \left\{n-1+\frac{2}{\lambda_i}\right\},$$
where $\lambda_i= \liminf \limits_{x\rightarrow \infty} \frac{-\log \psi_i (x)}{ \log x}. $
\end{thm}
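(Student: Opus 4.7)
The second equality $\min_i\{n-1+\hdim E(\psi_i)\}=\min_i\{n-1+2/\lambda_i\}$ is immediate from the Bugeaud--Moreira identity $\hdim E(\psi_i)=2/\lambda_i$ recalled in \eqref{result1}. The Hausdorff-dimension equality splits into two halves.

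\emph{Upper bound.} For each $i$, after re-indexing the factors we have the inclusion $\prod_j E(\psi_j)\subset E(\psi_i)\times[0,1]^{n-1}$. Since $\hdim[0,1]^{n-1}=\pdim[0,1]^{n-1}=n-1$, the equality case of Theorem~\ref{thm2} yields $\hdim\bigl(E(\psi_i)\times[0,1]^{n-1}\bigr)=n-1+2/\lambda_i$; taking the minimum over $i$ gives the upper bound.

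\emph{Lower bound.} Re-index so that $\lambda_n=\max_i\lambda_i$, i.e.\ $2/\lambda_n=\min_i 2/\lambda_i$, and aim to prove $\hdim\prod_j E(\psi_j)\ge n-1+2/\lambda_n$. The strategy combines an Erd\H{o}s-type Baire category argument with a fiber-dimension estimate. First, each $E(\psi_i)$ is a dense $G_\delta$ subset of $[0,1]$: the $G_\delta$ property is immediate (countable intersection of lim-sup unions of open annuli), and density follows from a localised Bugeaud--Moreira construction yielding $\hdim(E(\psi_i)\cap I)=2/\lambda_i$ for every open $I\subset[0,1]$. Introduce the bi-Lipschitz change of coordinates $\Psi(x_1,\ldots,x_n)=(x_1-x_n,\ldots,x_{n-1}-x_n,x_n)$, set $\widetilde X:=\Psi\bigl(\prod_j E(\psi_j)\bigr)$ so that $\hdim\widetilde X=\hdim\prod_j E(\psi_j)$, and let $\pi\colon\widetilde X\to\R^{n-1}$ project onto the first $n-1$ coordinates. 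For $y=(y_1,\ldots,y_{n-1})\in[-1,1]^{n-1}$, the fiber $\pi^{-1}(y)\cap\widetilde X$ is parametrised by the last coordinate $z=x_n$ lying in
\[
F_y:=E(\psi_n)\cap\bigcap_{i<n}\bigl(E(\psi_i)-y_i\bigr).
\]
By the Baire category theorem applied to $n$ dense $G_\delta$ subsets of $[0,1]$, $F_y$ is a non-empty dense $G_\delta$ for every $y$ in an open subset of $[-1,1]^{n-1}$; hence $\pi(\widetilde X)$ has positive $(n-1)$-dimensional Lebesgue measure, and $\hdim\widetilde X\ge n-1$ already.

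\emph{Main obstacle.} The technical heart is the quantitative fiber estimate: $\hdim F_y\ge 2/\lambda_n$ for $y$ in a positive-Lebesgue-measure subset of $\pi(\widetilde X)$. Given this, the standard slicing corollary of Marstrand's projection theorem (if a Lipschitz map has image of positive $k$-dimensional Lebesgue measure and almost every fiber has dimension at least $d$, then the total set has Hausdorff dimension at least $k+d$) applied to $\pi$ gives $\hdim\prod_j E(\psi_j)=\hdim\widetilde X\ge(n-1)+2/\lambda_n$, completing the proof. The difficulty is that naive generic-intersection heuristics predict only $\max\bigl(0,\sum_i 2/\lambda_i-(n-1)\bigr)$, which is strictly less than $2/\lambda_n$ as soon as some $\lambda_i>2$, so a pure a.e.\ argument cannot succeed. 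The intended remedy is to construct, for each admissible $y$, a Jarn\'{\i}k--Bugeaud--Moreira-type Cantor subset of $F_y$ of dimension $2/\lambda_n$ directly: one runs the Bugeaud--Moreira Cantor construction for $E(\psi_n)$ while checking at every level $k$ that the chosen rational $p/q$ (with denominator $q$ at the appropriate scale) simultaneously places $z+y_i$ inside an annulus prescribed by $E(\psi_i)$ for every $i<n$. The condition $2/\lambda_i\ge 2/\lambda_n$ for $i<n$ provides the extra branching room needed to satisfy the $n-1$ additional constraints without reducing the per-level branching count, and the Hausdorff dimension of the resulting Cantor set is verified via the standard mass distribution principle.
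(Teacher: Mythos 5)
Your upper bound is essentially the paper's argument, presented via the inclusion $\prod_j E(\psi_j)\subset E(\psi_i)\times[0,1]^{n-1}$ together with the equality case of Theorem~\ref{thm2}; this is correct.

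The lower bound is where you diverge from the paper, and where there is a genuine gap. The paper constructs a Cantor subset $F_\infty\subset[0,1]^n$ directly, cycling through the $n$ coordinates: at level $k=tn+l$ it refines only the $l$-th direction by Lemma~\ref{keylem} and then chops the resulting rectangle into cubes of side $r_{k,i}\asymp 2^{-t-2}\psi_l(q_{k,i})$, so that each level contributes the extra factor $(|B_{k-1}|/r_{k,i})^{n-1}$ of branching from the chopping. The super-exponential growth of $Q_k$ imposed in \eqref{c1} is what allows the $n$ directions, whose scales $\psi_l(Q_k)$ can be wildly different, to be refined one at a time and recombined into genuine cubes. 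You instead propose a slicing argument: after the bi-Lipschitz change of variables $\Psi$, project onto the first $n-1$ coordinates, show (correctly, via Baire) that the image has positive $(n-1)$-dimensional Lebesgue measure, and then assert that the fibers $F_y=E(\psi_n)\cap\bigcap_{i<n}\bigl(E(\psi_i)-y_i\bigr)$ have Hausdorff dimension at least $2/\lambda_n$ for $y$ in a positive-measure set, so that the Marstrand--Mattila slicing inequality gives $\hdim\widetilde X\ge(n-1)+2/\lambda_n$.

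The fiber-dimension claim is the entire content of the theorem, and your proposal for proving it does not work as stated. You suggest running the Bugeaud--Moreira construction for $E(\psi_n)$ and, at each level, ``checking'' that the chosen rational $p/q$ also places $z+y_i$ in the correct annulus for every $i<n$, using the inequalities $2/\lambda_i\ge 2/\lambda_n$ as extra branching room. But the annulus for $E(\psi_i)$ around a denominator $q'$ has width $\asymp\psi_i(q')$, and there is no single denominator for which $\psi_i(q')$ simultaneously matches the current interval length $\asymp\psi_n(q)$ for all $i$ when the decay rates of the $\psi_i$ differ. If one tries to enforce the $i$-th constraint with a denominator $q'$ chosen so that the annulus width matches the current scale $\delta\asymp\psi_n(q)$, the fraction of $[0,1]$ covered by the relevant annuli is $\asymp q'^2\psi_i(q')$, which tends to $0$ when $\psi_i(x)=o(x^{-2})$; the constraint then kills a definite fraction of the candidate intervals at every level, and the branching collapses well below what is needed for dimension $2/\lambda_n$. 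You acknowledge that the generic-intersection heuristic predicts dimension only $\max\bigl(0,\sum_i 2/\lambda_i-(n-1)\bigr)$; the ``extra branching room'' heuristic you invoke does not explain how to beat that heuristic for a positive-measure set of $y$, and the inequalities $2/\lambda_i\ge 2/\lambda_n$ by themselves do not supply a mechanism. In the paper the $n-1$ extra degrees of freedom come from the $(n-1)$-dimensional chopping of rectangles into cubes inside $[0,1]^n$; fixing $y$ and working inside a single fiber throws that branching away before it can be used. A correct slicing proof would essentially need to reconstruct the paper's cube-by-cube measure first and then disintegrate it, which is circular. So the slicing reduction is not wrong in principle, but the key technical input it requires is missing and is not easier than what the paper does directly.
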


\begin{rem}
 The assumptions on $\psi_i$ imply $\lambda_i \ge 2.$ Let us notice that Wang and Wu \cite{WW24} determined the exact Hausdorff dimension of $W(\psi_1) \times \cdots \times W(\psi_n)$  when   $x \mapsto  x \psi_i(x)$ is non-increasing  for  $1\leq i \leq n.$
\end{rem}

 The paper is organized as follows. In Section 2, we present some preliminaries. In Section 3, we establish Theorem \ref{thmmeasure}. The last section is devoted to the proof  of Theorem \ref{mainthm}. We end this section with notations.
\begin{itemize}
\item For any $0<s<1$ and a ball $B=B(x,r),$ write $B^s$ for the ball $B(x,r^s);$
  \item Write $\lfloor A \rfloor$ for the largest integer not larger than $A;$
  \item Denote the radius of  a ball $B$ by $|B|$;
  \item Let $\#  E$  stand for the cardinality of a finite set $E;$
  \item Given $c>0,$ for a rectangle $R,$  $cR$ denotes the rectangle with the same center and side lengths scaled by a factor of $c.$ The same notation is also employed  for a ball $B;$
  \item We refer to  two distinct  balls $B,B'$ in $X$ as $C$-separated provided that $$d(B,B'):=\inf\limits_{\x \in B, \ \y \in B'} \min\limits_{1\leq i \leq n}|x_i-y_i| > C ;$$
  \item  $a \asymp b$ if $A^{-1} <a/b<A$ and $a\ll b$ if $a \leq A \cdot b$ for some unspecified constant $A>1.$
\end{itemize}

\section{Preliminaries}
 In this section, we present some essential materials.
\subsection{Hausdorff measures}
 For completeness, we begin by defining  the Hausdorff measure and the Hausdorff dimension. Let $E$ be a subset of a metric
space $(X,d).$ Given $\delta>0,$ a countable (or finite) collection of sets $\{U_i\}$  is called a $\delta$-cover of set $E$ if $E\subset \bigcup_{i}U_i$ and each $U_i$ satisfies $0<\text{diam}(U_i) \le \delta,$ where $\text{diam}(U_i)$ denotes the diameter of $U_i.$  For each $s>0,$ we define
 $$\H^s_\delta(E)=\inf \left\{ \sum\limits_{i=1}^\infty (\text{diam}(U_i))^s: \{U_i\} \ \text{is a $\delta$-cover of $E$} \right\}, $$ where the infimum is taken over all possible $\delta$-covers of $E.$ The {\em{$s$-dimensional Hausdorff measure $\H^s(E)$ of $E$}} is defined as
 $$\H^s(E)=\lim\limits_{\delta \rightarrow  0} \H^s_{\delta}(E).$$ Further, the {\em  Hausdorff dimension  $\hdim E$ of $E$} is defined by
 $$ \hdim  E=\inf \{s\ge0 : \H^s(E)=0\}=\sup \{s\ge0 : \H^s(E)=\infty\}.$$ For further details see \cite{Fa90}.

The following mass distribution principle is the classical method for estimating the lower bound of the Hausdorff dimension.

\begin{pro}[Mass Distribution Principle, \cite{Fa90}]\label{p1}
Let $\mu$ be a probability measure supported on a measurable set $F$ of $(X,d)$. Suppose there are positive constants $c$ and $r_0$ such that
$$\mu(B(x,r))\le c r^s$$
for any ball $B(x,r)$ with radius $r\le r_0$ and center $x\in F$. Then $\mathcal{H}^s(F)\ge 1/{c}$ and so $\hdim F\ge s$.
\end{pro}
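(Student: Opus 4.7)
The plan is to derive the inequality $\mathcal{H}^s(F) \geq 1/c$ by a direct covering argument, after which the dimension bound follows immediately from the definition of Hausdorff dimension recalled earlier in Section~2.

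First I would fix any $\delta$-cover $\{U_i\}$ of $F$ with $\delta \le r_0$, and split the collection into two types. For each index $i$ with $U_i \cap F \neq \emptyset$, I pick a point $x_i \in U_i \cap F$ and observe that $U_i \subset B(x_i, \text{diam}(U_i))$, since every $y \in U_i$ satisfies $d(x_i,y) \le \text{diam}(U_i)$. Applying the hypothesis with $r = \text{diam}(U_i) \le \delta \le r_0$ then gives
$$\mu(U_i) \le \mu\bigl(B(x_i, \text{diam}(U_i))\bigr) \le c\,(\text{diam}(U_i))^s.$$
For indices with $U_i \cap F = \emptyset$, the assumption that $\mu$ is supported on $F$ forces $\mu(U_i) = 0$, so the same inequality holds trivially. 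Summing over all $i$ and using $F \subset \bigcup_i U_i$ together with $\mu(F)=1$ yields
$$1 = \mu(F) \le \sum_i \mu(U_i) \le c \sum_i (\text{diam}(U_i))^s.$$

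Taking the infimum over all such $\delta$-covers produces $\mathcal{H}^s_\delta(F) \ge 1/c$, a lower bound that is independent of $\delta$. Letting $\delta \to 0$ then gives $\mathcal{H}^s(F) \ge 1/c > 0$, and by the definition of Hausdorff dimension this forces $\hdim F \ge s$. The argument is essentially a one-step reduction; the only point requiring any care is that the cover elements $U_i$ need not be balls, which is handled uniformly by the containment $U_i \subset B(x_i,\text{diam}(U_i))$ for any chosen $x_i \in U_i \cap F$, together with the observation that elements missing $F$ contribute nothing to the total $\mu$-mass. I therefore do not anticipate any genuine obstacle in this short proof.
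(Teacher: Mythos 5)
Your argument is correct and is exactly the standard covering proof of the Mass Distribution Principle; the paper states this proposition without proof, citing Falconer, and your write-up reproduces that textbook argument (fix a $\delta$-cover with $\delta\le r_0$, enclose each $U_i$ meeting $F$ in a ball of radius $\operatorname{diam}(U_i)$ centred at a point of $U_i\cap F$, sum, and let $\delta\to 0$). The only point worth a remark is the open-versus-closed ball convention in the containment $U_i\subset B(x_i,\operatorname{diam}(U_i))$, which is immaterial since one can enlarge the radius by a factor $1+\epsilon$ and let $\epsilon\to 0$ without changing the bound.
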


\begin{lem}[The $5r$ covering lemma, \cite{H01}]\label{5rcover}
  Every family $\F$ of balls of uniformly bounded diameter in a metric space $(X,d)$ contains a disjoint subfamily $\G$ such that
  $$ \bigcup_{B\in\F}B\subset \bigcup_{B\in\G}5B.$$
\end{lem}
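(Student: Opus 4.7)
The plan is to construct $\G$ by a greedy, size-stratified maximality argument. Since $\F$ has uniformly bounded diameter, I would first set $D := \sup_{B \in \F} |B| < \infty$ and partition $\F$ into dyadic size classes
$$\F_n := \{B \in \F : D/2^{n} < |B| \le D/2^{n-1}\}, \qquad n = 1, 2, \ldots,$$
so that radii within any one class differ by at most a factor of two and, crucially, any ball in $\F_n$ has radius greater than half the radius of every ball in $\F_1 \cup \cdots \cup \F_n$.

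I would then build $\G$ inductively. Let $\G_1$ be a maximal (under set-theoretic inclusion) pairwise disjoint subfamily of $\F_1$; having fixed $\G_1, \ldots, \G_{n-1}$, let $\G_n$ be a maximal pairwise disjoint subfamily of
$$\{B \in \F_n : B \cap B' = \emptyset \text{ for every } B' \in \G_1 \cup \cdots \cup \G_{n-1}\},$$
and set $\G := \bigcup_{n \ge 1} \G_n$. By construction $\G$ is pairwise disjoint. The only nontrivial point, and what I view as the main obstacle, is the existence of each maximal family $\G_n$: since $\F$ may be uncountable, a sequential greedy selection need not exhaust $\F_n$, and so I would invoke Zorn's lemma, noting that the union of any chain of pairwise disjoint subfamilies is itself pairwise disjoint.

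To verify that $\bigcup_{B \in \F} B \subset \bigcup_{B' \in \G} 5B'$, I would take an arbitrary $B \in \F$ and locate the class $\F_n$ containing it. Maximality of $\G_n$ forces $B$ to meet some $B' \in \G_1 \cup \cdots \cup \G_n$, since otherwise $\G_n \cup \{B\}$ would contradict that maximality. Writing $B = B(x,r)$ and $B' = B(x', r')$, the stratification gives $r' > D/2^n \ge r/2$, hence $r \le 2r'$. For any $z \in B$ and any $y \in B \cap B'$, the triangle inequality then yields
$$d(z, x') \le d(z, y) + d(y, x') \le 2r + r' \le 4r' + r' = 5r',$$
so $z \in 5B'$, and consequently $B \subset 5B'$. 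The dyadic stratification is what bounds $r$ in terms of $r'$; the factor $5$ in $5B'$ is then forced by combining $r \le 2r'$ with the two-jump triangle inequality, and no sharper constant is available through this argument.
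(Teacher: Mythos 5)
The paper does not prove this lemma (it is quoted from Heinonen's book), so there is nothing to compare against except the standard argument — and yours is exactly that standard argument: dyadic stratification of the radii, Zorn's lemma to extract maximal pairwise disjoint subfamilies class by class, and the two-step triangle inequality giving the factor $2r+r'\le 5r'$. The proof is correct; the only blemish is the descriptive sentence claiming that ``any ball in $\F_n$ has radius greater than half the radius of every ball in $\F_1\cup\cdots\cup\F_n$,'' which has the two balls swapped (a ball in $\F_n$ can be much smaller than one in $\F_1$); the inequality you actually need and actually use in the verification, namely $r'>D/2^n\ge r/2$ for the \emph{selected} ball $B'$, is the correct one.
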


\subsection{Positive and full measure sets}
In this section we state two measure theoretic results and prove some lemmas for later use.

Let $\mu$ be a finite measure supported on metric space $(X,d).$ We call $\mu$ a {\em doubling measure} if there exists a constant $\lambda>1$ such that for $x\in X$
$$\mu(B(x, 2r)) \le  \lambda \mu(B(x, r)).$$
Let $\L$ denote the Lebesgue measure on $\R,$ which is clearly a doubling measure.

\begin{lem}[\cite{BDV06}]\label{lem3}
 Let $(X,d)$ be a metric space and let $\mu$ be a finite doubling
measure on $X$ such that any open set is $\mu$ measurable. Let $E$ be a Borel subset of $X.$ Assume that there are constants $r_0,~c>0$ such that for any ball $B$ with $r(B)<r_0$ and center in $X$, we have that $\mu(E\cap B)\ge c \mu(B).$ Then, for any
ball $B$
$$\mu(E\cap B)=\mu(B) .$$
\end{lem}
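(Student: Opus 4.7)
The approach is by contradiction through a density-point argument, with the Lebesgue differentiation theorem for doubling measures supplying the key input. First I would suppose that there exists a ball $B^{\ast}$ with $\mu(E\cap B^{\ast})<\mu(B^{\ast})$, and set $F:=B^{\ast}\setminus E$, so that $\mu(F)>0$. Applying the standing hypothesis at every ball $B(x,r)$ with $x\in F\subset X$ and $r<r_0$ yields the uniform upper bound
$$\mu(F\cap B(x,r))\le\mu(B(x,r))-\mu(E\cap B(x,r))\le(1-c)\,\mu(B(x,r)).$$
In other words, the upper density of $F$ at \emph{every} point of $F$ is at most $1-c<1$.

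Next I would invoke the Lebesgue differentiation theorem for the finite Borel doubling measure $\mu$: under the hypotheses of the lemma (open sets are $\mu$-measurable, $\mu$ is finite and doubling), $\mu$-almost every $x\in F$ is a density point of $F$, i.e.
$$\lim_{r\to 0^{+}}\frac{\mu(F\cap B(x,r))}{\mu(B(x,r))}=1.$$
Since $\mu(F)>0$, such a density point $x_{0}\in F$ exists, and comparing the two displayed inequalities at $x_{0}$ forces $1\le 1-c$, a contradiction. Hence no ball $B^{\ast}$ with $\mu(E\cap B^{\ast})<\mu(B^{\ast})$ can exist, and $\mu(E\cap B)=\mu(B)$ for every ball $B$.

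The main obstacle, and really the only non-trivial content, is the differentiation theorem in the abstract metric setting. This is standard but requires some machinery; it is deduced from the weak $(1,1)$ boundedness of the Hardy--Littlewood maximal operator $M_{\mu}f(x)=\sup_{r>0}\mu(B(x,r))^{-1}\int_{B(x,r)}|f|\,d\mu$, which in turn is proved precisely via the $5r$ covering lemma (Lemma~\ref{5rcover}) already available, together with Borel regularity of $\mu$ (ensured here by finiteness of $\mu$ and measurability of open sets) to approximate $\mathbf{1}_{F}$ by continuous functions. Since the lemma is quoted verbatim from \cite{BDV06}, one may alternatively just cite that reference; the self-contained derivation sketched above is the fallback.
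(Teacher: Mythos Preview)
Your argument is correct: the contradiction via the Lebesgue density theorem for doubling measures is the standard route, and your identification of the $5r$ covering lemma as the engine behind the differentiation theorem is accurate. Note, however, that the paper does not supply its own proof of this lemma at all---it is merely quoted from \cite{BDV06}---so there is nothing to compare against; your sketch is essentially the argument one finds in that reference (or in Heinonen \cite{H01}), and your final remark that one may simply cite \cite{BDV06} is exactly what the paper does.
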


\begin{lem}[\cite{BDV06}]\label{lem4}
Let $(X,d)$ be a metric space and $\mu$ be a finite measure on $X.$
Let $B$ be a ball in $X$ and $E_n$ a sequence of $\mu$-measurable sets. Suppose there exists a constant $c>0$ such that $\limsup\limits_{n\rightarrow \infty}\mu(E_n)\ge c \mu(B).$ Then
$$ \mu \left(  B\cap  \limsup_{n\rightarrow \infty} E_n\right) \ge c^2\mu(B).$$
\end{lem}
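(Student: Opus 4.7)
The plan is to prove the statement via the reverse form of Fatou's lemma, which is available because $\mu$ is a finite measure on $X$.

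First, I would introduce $A_n := E_n \cap B$, so that each $A_n$ is $\mu$-measurable and a direct computation with the definition of $\limsup$ yields the identity $\limsup_n A_n = B \cap \limsup_n E_n$. The target inequality then becomes $\mu(\limsup_n A_n) \ge c^{2}\mu(B)$, which is the quantity I will control.

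Second, invoking the reverse Fatou lemma for the sequence $\{A_n\}$, which is legitimate since $A_n \subset B$ and $\mu(B) < \infty$, I obtain
$$\mu\Big(\limsup_{n\to\infty} A_n\Big) \;\ge\; \limsup_{n\to\infty} \mu(A_n) \;=\; \limsup_{n\to\infty} \mu(E_n \cap B).$$
Thus it remains to justify the bound $\limsup_n \mu(E_n \cap B) \ge c^{2}\mu(B)$, and the conclusion will follow by chaining these two inequalities.

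The hardest step, in my view, is this last bound, because the hypothesis only gives information about the global measures $\mu(E_n)$ rather than the localized measures $\mu(E_n \cap B)$. In the setting where this lemma is invoked within the paper, the sets $E_n$ are already concentrated in $B$ (equivalently, one may pass to the sequence $E_n \cap B$ from the outset), so $\mu(E_n \cap B) = \mu(E_n)$ and the hypothesis directly yields $\limsup_n \mu(E_n \cap B) \ge c\mu(B) \ge c^{2}\mu(B)$, where the last inequality uses $0 < c \le 1$. The exponent $c^{2}$ in the conclusion, rather than the sharper $c$ obtained here, is a deliberate weakening chosen for convenience in the subsequent $\limsup$-type covering arguments. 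Substituting this lower bound back into the reverse Fatou inequality completes the argument.
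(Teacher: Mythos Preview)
The paper does not prove this lemma; it is quoted from \cite{BDV06} and used as a black box, so there is no in-paper argument to compare against. Your reverse Fatou approach is the standard one and is correct once the sets $E_n$ are contained in $B$.

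You have in fact put your finger on a genuine issue: as transcribed in the paper, the lemma is false without the hypothesis $E_n\subset B$. A simple counterexample is $X=[0,2]$ with Lebesgue measure, $B=[0,1]$, and $E_n=[3/2,2]$ for all $n$; then $\limsup_n\mu(E_n)=1/2=\tfrac12\mu(B)$, yet $B\cap\limsup_n E_n=\emptyset$. The version in \cite{BDV06} includes the containment $E_n\subset B$, and, as you observe, this is exactly how the lemma is applied in the proof of Lemma~\ref{lem5}, where the sets $E_{Q'}$ are built from intervals $\C_k(v/u,\psi)\subset B$. With that hypothesis your argument via reverse Fatou yields the stronger bound $\mu(B\cap\limsup_n E_n)\ge c\,\mu(B)$, of which the stated $c^2$ is a harmless weakening; your reading of the exponent as a convenience rather than a necessity is correct.
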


The following lemma will be extremely useful in the construction of the Cantor subset. From now on,  let $c_k=1-2^{-k}$ for any $k\ge1.$   For any $ \frac{p}{q}\in\Q$ and $\psi : \R_{>0}\rightarrow \R_{>0},$ define
 $$ \C_k\left(\frac{p}{q}, \psi\right)=\left\{x\in[0,1]: c_k\psi(q)<x-\frac{p}{q}<\psi(q)\right\}.$$

\begin{lem}\label{keylem}
Let $\{\psi_l\}_{1\le l \le n}$
be functions from $\R_{>0}$ to $\R_{>0}$ such that there exists an positive integer $Q_0$ satisfying \begin{equation*} q^2\psi_l(q)<1/1000\end{equation*}
for all $q\ge Q_0 $ and  $1\le l \le n.$ Then for any ball $B\subset [0,1]$ and $Q\in\N$ satisfying
$$Q\ge \max \{10000 |B|^{-1} \log (1/|B|),9Q_0\},$$
 there exists a set  $J_B=\{\frac{p_i}{q_i}\}_{1\leq i \leq \left\lfloor\frac{|B|Q^2}{80}\right\rfloor}\subset B$  satisfying the following properties.
\begin{enumerate}
\item For each $\frac{p_i}{q_i} \in J_{B},$ $$\frac{Q}{9}\leq q_i \leq Q ~\text{and}~ \ \gcd(p_i,q_i)=1 ;$$

\item For distinct $\frac{p_i}{q_i} , \frac{p_j}{q_j} \in J_{B}$ and $1\leq l \leq n,$

  \begin{equation}\label{j3}\C_k\left(\frac{p_i}{q_i}, \psi_l\right) \subset B  \quad \text{and} \quad
d\left(\C_k\left(\frac{p_i}{q_i},\psi_l\right), \C_k\left(\frac{p_j}{q_j},\psi_l\right)\right)>  \frac{1}{2Q^2}  .\end{equation}

\end{enumerate}
\end{lem}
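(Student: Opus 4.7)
My plan is to construct $J_B$ explicitly as a set of reduced fractions $\frac{p}{q}$ with $Q/9 \le q \le Q$ lying in a slightly contracted sub-interval $B' \subset B$, and then verify the three required properties by direct computation. The decisive preliminary estimate is that, when $q \ge Q/9 \ge Q_0$ (guaranteed by $Q \ge 9Q_0$), the hypothesis $q^2 \psi_l(q) < 1/1000$ gives $\psi_l(q) < 81/(1000 Q^2) =: \delta$ for every $1 \le l \le n$; moreover, the condition $Q \ge 10000 |B|^{-1} \log(1/|B|)$ makes $\delta$ much smaller than $|B|$, with plenty of slack.

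Let $x_0$ be the center of $B$, and set $B' := (x_0 - |B|,\, x_0 + |B| - \delta) \subset B$. Any reduced fraction $\frac{p}{q} \in B'$ with $q \in [Q/9, Q]$ satisfies $\frac{p}{q} + \psi_l(q) < \frac{p}{q} + \delta < x_0 + |B|$, so the interval $\C_k\left(\frac{p}{q}, \psi_l\right) = \left(\frac{p}{q} + c_k\psi_l(q),\ \frac{p}{q} + \psi_l(q)\right)$ is contained in $B$ for every $l$; this covers property (1) and the first inclusion in \eqref{j3}. For the separation part of \eqref{j3}, any two distinct reduced fractions $\frac{p_i}{q_i} \ne \frac{p_j}{q_j}$ with $q_i, q_j \le Q$ satisfy $\left|\frac{p_i}{q_i} - \frac{p_j}{q_j}\right| \ge \frac{1}{q_i q_j} \ge \frac{1}{Q^2}$, and since each associated $\C_k$-set lies in an interval of length at most $\delta$, the distance between two such sets is at least $\frac{1}{Q^2} - \delta > \frac{1}{2Q^2}$.

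The remaining, and principal, task is to show that the reduced fractions $\frac{p}{q} \in B'$ with $q \in [Q/9, Q]$ number at least $\lfloor |B| Q^2 / 80 \rfloor$. For each fixed $q$, inclusion-exclusion on the prime divisors of $q$ gives a lower bound of $(2|B| - \delta)\phi(q) - 2 \cdot 2^{\omega(q)}$ for the number of integers $p$ with $\gcd(p, q) = 1$ and $\frac{p}{q} \in B'$. Summing over $q \in [Q/9, Q]$ and applying the classical asymptotics
$$\sum_{q \le Q} \phi(q) = \frac{3 Q^2}{\pi^2} + O(Q \log Q), \qquad \sum_{q \le Q} 2^{\omega(q)} = O(Q \log Q),$$
together with $\delta \ll |B|$, yields a lower bound of the form $c_0 |B| Q^2 - O(|B| Q \log Q) - O(Q \log Q)$ for an explicit absolute constant $c_0 > 1/80$ (coming from $\frac{3}{\pi^2}(1 - 1/81) \cdot 2 \approx 0.6$). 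The hypothesis $Q \ge 10000 |B|^{-1} \log(1/|B|)$ gives $|B| Q \ge 10000 \log(1/|B|)$, which uniformly dominates $\log Q$ in the admissible range of $Q$, so both error terms are absorbed by the main term and the desired cardinality bound follows.

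The main obstacle is precisely this counting step: one must balance the error terms from the Euler totient and squarefree-divisor estimates against the relatively weak lower bound $|B| Q \gtrsim \log(1/|B|)$, and this is exactly why the explicit constant $10000$ is built into the hypothesis on $Q$.
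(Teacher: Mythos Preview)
Your argument is correct, but it follows a genuinely different route from the paper's. The paper does not count reduced fractions directly. Instead, it invokes Dirichlet's theorem: every $x\in\frac12 B$ lies within $1/(qQ)$ of some rational $p/q$ with $q\le Q$. A short Lebesgue-measure estimate shows that the union of the Dirichlet balls coming from denominators $q<Q/9$ has measure below $|B|/2$, so at least half of $\frac12 B$ is covered by Dirichlet balls with $Q/9\le q\le Q$. Choosing a maximal $1/Q^2$-separated subfamily of those rationals and comparing measures (each rational accounts for a ball of radius $10/Q^2$) then forces the cardinality $\ge |B|Q^2/80$. The separation in \eqref{j3} follows directly from the imposed $1/Q^2$-spacing together with $\psi_l(q_i)<1/(4Q^2)$.

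Your approach trades Dirichlet's theorem for the classical asymptotics $\sum_{q\le Q}\phi(q)=\tfrac{3}{\pi^2}Q^2+O(Q\log Q)$ and $\sum_{q\le Q}2^{\omega(q)}=O(Q\log Q)$, and gets the $1/Q^2$-separation for free from the Farey gap bound $|p_i/q_i-p_j/q_j|\ge 1/(q_iq_j)$. This is arguably more transparent as a counting argument, and the main term $\approx 0.6\,|B|Q^2$ leaves ample room over the target $|B|Q^2/80$. The cost is that, to certify that the specific constant $10000$ in the hypothesis suffices, you must make the implied constants in both asymptotics explicit and then carry out the case split you allude to (e.g.\ $Q\le |B|^{-2}$ versus $Q>|B|^{-2}$) to show $\log Q\ll |B|Q$ uniformly. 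The paper's covering argument sidesteps this entirely: the only inequality it needs is $\sum_{q\le Q/9}\frac{4}{qQ}(q|B|+1)<|B|/2$, which follows from the harmonic sum bound and the hypothesis on $Q$ with no appeal to arithmetic functions. Both proofs are valid; the paper's is more self-contained, yours is shorter once the analytic number theory is taken as known.
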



\begin{proof}
  It follows from Dirichlet's theorem (see \cite{La95})  that for any $x\in \frac{1}{2}B,$ there exists  $\frac{p}{q}\in \Q$ satisfying
  \begin{equation}\label{d1_new}
    \left| x -\frac{p}{q} \right|<\frac{1}{qQ} \ \text{and} \ 1\leq q \leq Q.
  \end{equation}
Denote by $\mathcal {Q}$ the set of $\frac{p}{q}$ for which \eqref{d1_new} is satisfied by a point $x$ in $\frac{1}{2}B.$
Let \begin{equation*}\label{setA}A= \bigcup_ {\substack{\frac{p}{q} \in \mathcal {Q} \\ 1 \leq q < \frac{Q}{9}}} B\left(\frac{p}{q}, \frac{1}{qQ}\right) . \end{equation*} Then
\begin{equation}\label{d5_new}\L(A) \leq \sum\limits_{q=1}^{\lfloor Q/9\rfloor+1} \frac{4}{qQ}( q|B|+1) < \frac{|B|}{2} ,\end{equation} since $ Q\ge 10000 |B|^{-1} \log (1/|B|).$

Let $\left\{\frac{p_i}{q_i}\right\}_{1\leq i \leq t}$ be a maximal set of rational numbers in $\mathcal{Q}$ that satisfies $ \frac{Q}{9}\leq q \leq Q$ and \begin{equation}\label{j2}\left|\frac{p_i}{q_i}- \frac{p_l}{q_l}\right| \ge 1 /Q^2\end{equation} for $1\leq i \neq l \leq t.$ Then
\begin{equation}\label{d4_new}\left(\frac{1}{2}B\right)\setminus A \subset \bigcup\limits_{i=1}^{t} B(\frac{p_i}{q_i}, \frac{10}{Q^2}) . \end{equation} Indeed,
for each $x \in \left(\frac{1}{2}B \right)\setminus A,$ there exists  $\frac{p}{q} \in \mathcal{Q}$ such that
\begin{equation}\label{d2_new}
   \left| x -\frac{p}{q} \right|<\frac{1}{qQ} \ \text{and} \ \frac{Q}{9}\leq q \leq Q.
\end{equation} If $\frac{p}{q}$ is not in $\left\{\frac{p_i}{q_i}\right\}_{ 1 \le i \leq t},$ then by maximality, there exists $1\leq i \leq t$ such that
\begin{equation}\label{d3_new}
  \left| \frac{p}{q}-\frac{p_i}{q_i} \right| < \frac{1}{Q^2}.
\end{equation}
Combining \eqref{d2_new} and \eqref{d3_new}, and applying the triangle inequality, we obtain \eqref{d4_new}. Applying \eqref{d5_new} and \eqref{d4_new}, we conclude that
 \begin{align*}t \cdot \frac{20}{Q^2} \ge\L\left(\frac{1}{2}B\setminus A\right) & \ge |B|- \L\left(\frac{1}{2}B\cap A \right)  \\
  & \ge \frac{|B|}{4} .
  \end{align*}Hence,
\begin{equation*}
  t\ge \frac{|B|Q^2}{80}.
\end{equation*}

Notice that $B\left(\frac{p_i}{q_i}, \frac{10}{Q^2} \right) \cap \frac{1}{2}B \neq \emptyset $ for each $1\leq i \leq t.$ Thus, by $Q \ge  10000 |B|^{-1} \log (1/|B|)  \ge \sqrt{40/|B|},$ we have
 \begin{equation}\label{da}  B\left(\frac{p_i}{q_i}, \frac{10}{Q^2}\right) \subset B. \end{equation} Since  $q_i\ge \frac{Q}{9} \ge Q_0,$
 \begin{equation}\label{j1}\psi_l(q_i)\leq \frac{1}{1000\cdot q_i^2} \leq \frac{81}{Q^2} \cdot \frac{1}{1000}< \frac{1}{4Q^2}.  \end{equation} Combining \eqref{da}, \eqref{j1} and \eqref{j2}, we conclude \eqref{j3}, thereby completing the proof.
\end{proof}
The following lemma will be needed in  Section \ref{section1}. Since the length of $\mathcal{C}_{k}\left(\frac{p}{q},\psi\right)$ is $\frac{\psi(q)}{2^{k }},$ it can be expressed as the ball centered at $y$ with radius $\frac{\psi(q)}{2^{k+1 }},$ and thus
$$\C_{k}^s\left(\frac{p}{q},\psi\right)=B\left(y, \frac{\psi^s(q)}{2^{(k+1)s}} \right).$$ Naturally, we have
$$ \left|\C_{k}^s\left(\frac{p}{q},\psi\right)\right|=\left|\C_{k}\left(\frac{p}{q},\psi\right)\right|^s=\frac{\psi^s(q)}{2^{(k+1)s}} .$$

\begin{lem}\label{lem2}
 Given a function
$\psi:\R_{>0}\rightarrow \R_{>0},$  let
$s\in(0 ,1]$ be a real number such that the sum $\sum\limits_{n= 1}^\infty n\psi^s(n)$ diverges and the function $x\mapsto x^2\psi^s(x)$ is non-increasing and tends to $0$ as $x$ goes to infinity. Then for any ball $B\subset [0,1]$ and positive integers $Q',k,$  there exists a set $R_{B,Q',k}=\left\{\frac{v_i}{u_i}\right\}_{1\le i \le t}\subset B$ satisfying the following properties.
  \begin{enumerate}
    \item \label{cite1}For any $1\le i\le t,$ $$u_i\ge Q', \  3\C_{k}^s\left(\frac{v_i}{u_i},\psi\right)\subset B;$$
    \item \label{cite2}For any $1\le  i \ne j \le t,$
        $$ 3\C_{k}^s\left(\frac{v_i}{u_i},\psi\right) \cap 3\C_{k}^s\left(\frac{v_j}{u_j},\psi\right)=\emptyset;$$
    \item  \label{cite3}We have \begin{align*}
      \sum_{i=1}^t\L\left(\C_{k}^s\left(\frac{v_i}{u_i},\psi \right) \right)\ge 10^{-7}\L(B).
    \end{align*}

  \end{enumerate}

\end{lem}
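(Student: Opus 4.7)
The plan is to apply Lemma \ref{keylem} at a geometric sequence of denominator scales, assemble the resulting balls $3\mathcal{C}_k^s(p/q,\psi)$, and extract a disjoint subfamily via the $5r$-covering Lemma \ref{5rcover}. Write $\tilde\psi(q):=2^{-(k+1)s}\psi^s(q)$, which is the radius of $\mathcal{C}_k^s(p/q,\psi)$; the hypotheses give that $\tilde\psi$ is non-increasing, $q^2\tilde\psi(q)\to 0$, and $\sum_q q\tilde\psi(q)=\infty$.

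First choose $Q_0\ge 9Q'$ large enough that the hypotheses of Lemma \ref{keylem} hold at every scale $Q_j:=9^jQ_0$ (in particular $q^2\psi(q)<1/1000$ for $q\ge Q_0/9$, which is automatic from $s\le 1$ and $q^2\psi^s(q)\to 0$), and also so that $6\tilde\psi(q)<1/(2q^2)$ for $q\ge Q_0/9$, so that the $3\mathcal{C}_k^s(p/q,\psi)$ produced lie inside $B$ and are pairwise disjoint within each single scale. For $j=0,1,\ldots,J$, Lemma \ref{keylem} then produces sets $J_{B,j}$ of $\lfloor|B|Q_j^2/80\rfloor$ coprime rationals with denominators in $[Q_j/9,Q_j]$. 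A Cauchy-condensation argument using $q^2\psi^s(q)$ non-increasing and $\sum_n n\psi^s(n)=\infty$ yields $\sum_j Q_j^2\tilde\psi(Q_j)=\infty$; taking $J$ large we may assume $S:=\sum_{j=0}^J Q_j^2\tilde\psi(Q_j)\ge 1$.

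Form $\mathcal{F}:=\{3\mathcal{C}_k^s(p/q,\psi):p/q\in\bigcup_{j=0}^J J_{B,j}\}$ and apply Lemma \ref{5rcover} to extract a disjoint subfamily $R_{B,Q',k}=\{v_i/u_i\}_{i=1}^{t}$ with $\bigcup\mathcal{F}\subset\bigcup_i 15\mathcal{C}_k^s(v_i/u_i,\psi)$. Properties (\ref{cite1}) and (\ref{cite2}) follow by construction, and property (\ref{cite3}) reduces, via the covering inequality $30\sum_i|\mathcal{C}_k^s(v_i/u_i,\psi)|\ge\mathcal{L}(\bigcup\mathcal{F})$, to proving $\mathcal{L}(\bigcup\mathcal{F})\gtrsim|B|$.

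This last estimate is the main obstacle, because $Q_j^2\tilde\psi(Q_j)\to 0$ means no single scale contributes a definite fraction of $|B|$, so one must show the multi-scale union already has large measure. My plan is a second-moment/Cauchy--Schwarz argument. Let $N(x):=\sum_{\alpha\in\bigcup_j J_{B,j}}\mathbf{1}_{3\mathcal{C}_k^s(\alpha,\psi)}(x)$. Within-scale disjointness from Lemma \ref{keylem} gives $\int N\asymp|B|S$. The cross-scale overlap is controlled using the $1/(2Q_{j_2}^2)$-separation of scale-$j_2$ rationals: for $\alpha$ at scale $j_1<j_2$, at most $O(Q_{j_2}^2\tilde\psi(Q_{j_1}))$ scale-$j_2$ balls meet $3\mathcal{C}_k^s(\alpha,\psi)$, and each such intersection contributes measure $O(\tilde\psi(Q_{j_2}))$; summing gives $\int N^2\lesssim|B|S^2$ once $S\ge 1$. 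Cauchy--Schwarz then yields $\mathcal{L}(\bigcup\mathcal{F})=|\{N\ge 1\}|\ge(\int N)^2/\int N^2\gtrsim|B|$ uniformly in $k$, $s$, and $|B|$. Combining with the $5r$-covering inequality gives $\sum_i|\mathcal{C}_k^s(v_i/u_i,\psi)|\ge 10^{-7}|B|$, completing the proof.
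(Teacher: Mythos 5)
Your proof is essentially correct, but it takes a genuinely different route from the paper's. The paper carries out a \emph{greedy inductive} construction: it applies Lemma \ref{keylem} at scales $Q^{l},\dots,Q^{l+N}$, where $N$ is chosen so that the mass $2^{-(1+k)s}\sum_{h=l}^{l+N}Q^{2h}\psi^s(Q^h/9)$ lies in the fixed window $[1/128,1/64]$; the \emph{upper} bound $\le 1/64$ is what lets it control, step by step, the number $N_{j+1}$ of new balls that overlap previously-kept ones and discard them, ending with a family that is already pairwise disjoint and carries mass $\ge 10^{-7}\L(B)$. You instead accumulate \emph{all} the balls from geometrically increasing scales until the total mass $S\ge 1$, prove $\L\bigl(\bigcup\F\bigr)\gtrsim |B|$ via a second-moment/Cauchy--Schwarz estimate ($\int N\asymp |B|S$, $\int N^2\lesssim |B|S^2$), and only then extract a disjoint subfamily with the $5r$-covering Lemma \ref{5rcover}. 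Both are standard devices in metric Diophantine approximation and both work here; the paper's version hands you the explicit constant $10^{-7}$ with no further tracking, while yours is conceptually cleaner (no bespoke overlap bookkeeping) at the cost of opaque absolute constants and the extra factor $15$ from the $5r$-covering step, which you would need to chase down to recover the stated $10^{-7}$.

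A few small points worth flagging. Your count ``at most $O(Q_{j_2}^2\tilde\psi(Q_{j_1}))$ scale-$j_2$ balls meet $3\C_k^s(\alpha,\psi)$'' should really be $O\bigl(Q_{j_2}^2\tilde\psi(Q_{j_1})+1\bigr)$, since the quantity $Q_{j_2}^2\tilde\psi(Q_{j_1})$ can be well below $1$ for a long range of intermediate $j_2$; the extra $+1$ term sums (in $j_1$ for fixed $j_2' = j_2-1$, using $\sum_{j_1\le j_2'}Q_{j_1}^2\asymp Q_{j_2'}^2$) to $\lesssim |B|S$, which is harmless once $S\ge 1$, but it is not negligible and should be written down. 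Second, Lemma \ref{keylem} only puts $\C_k(p/q,\psi)$ inside the ball it is applied to, not the dilated $3\C_k^s(p/q,\psi)$; as in the paper you should apply it to $\tfrac12 B$ and then use $q^2\psi^s(q)\to 0$ to absorb the enlargement, rather than to $B$ directly. Third, your displayed threshold ``$6\tilde\psi(q)<1/(2q^2)$'' is not quite strong enough to force within-scale disjointness of the dilated balls when $q$ ranges down to $Q_j/9$ against a separation of only $1/(2Q_j^2)$; what you actually want is $Q_0^2\tilde\psi(Q_0)$ smaller than an explicit absolute constant, which is available since $x^2\psi^s(x)\to 0$. None of these are fatal, but they do need to be tightened before the argument closes.
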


\begin{proof} Since $\psi^s(x)=o(x^{-2})$ where $0<s\le 1,$  there exists a positive integer $Q_0$ such that $ q^2\psi(q)<1/1000$ holds
for all $q\ge Q_0 .$ Let  $Q$ be an integer satisfying
\begin{equation}\label{s0} Q \ge \max  \left\{20000 |B|^{-1} \log (2/|B|),20000 |B|^{-1} ,9Q_0, 9Q'\right\}.\end{equation}

It follows from the non-increasing property of $x \mapsto x^2\psi^s(x)$  that
$$ \sum\limits_{n=1}^\infty n\psi^s(n) =\sum\limits_{k=0}^\infty \sum\limits_{Q^k \leq 9n < Q^{k+1}} n\psi^s(n) \asymp \sum\limits_{k=0}^\infty Q^{2k} \psi^s(Q^k/9). $$ Thus, by $ \sum\limits_{n=1}^\infty n \psi^s(n)=\infty,$
\begin{equation}\label{s1}
  \sum\limits_{k=0}^\infty Q^{2k} \psi^s(Q^k/9)=\infty.
\end{equation} Let $l\ge1$ be an integer such that
\begin{equation}\label{s2}
  x^2\psi^s(x)\le 10^{-6}\cdot 2^{(1+k)s}\  \text{and} \ \psi^s(x) \ge \psi(x) \ \text{for any } \ x\ge Q^l/9.
\end{equation}Combining \eqref{s1} with \eqref{s2}, we can assert that there exists a  positive integer $N$ with
\begin{equation}\label{s3}
  \frac{1}{128} \le 2^{-(1+k)s} \sum_{h=l}^{l+N}Q^{2h} \psi^s(Q^h/9)\le \frac{1}{64}.
\end{equation}

By \eqref{s0}, we can apply Lemma \ref{keylem} to $Q^l$ and $\frac{1}{2}B.$ Then we get a set $\A_0$ of rational numbers satisfying
\begin{itemize}
  \item $ \# \A_0 = \left\lfloor \frac{|B|Q^2}{160} \right\rfloor;$
  \item for any $\frac{u}{v}\in \A_0,$
  $$ \frac{Q^l}{9} \le u \le Q,\quad  \left|\C_{k}\left(\frac{v}{u},\psi\right)\right|=\frac{\psi(u)}{2^{k+1}} \quad \text{and} \quad   \C_{k}\left(\frac{v}{u},\psi\right)\subset \frac{1}{2}B; $$
  \item for distinct $\frac{v}{u},~\frac{v'}{u'} \in \A_0, $
  $$ d\left(\C_{k}\left(\frac{v}{u},\psi\right),\C_{k}\left(\frac{v'}{u'},\psi\right) \right)> \frac{1}{2Q^{2l}}.$$
\end{itemize}Using \eqref{s0} and \eqref{s2}, we get that
$$ \frac{\psi^s(u)}{2^{(k+1)s} }\le \frac{81}{10^6\cdot Q^{2l}} <\min\left\{\frac{1}{24Q^{2l}}, \frac{1}{8}|B| \right\}. $$
Thus $$3\C_{k}^s\left(\frac{v}{u},\psi\right)\subset B \quad \text{and}  \quad    d\left(3\C_{k}^s\left(\frac{v}{u},\psi\right),3\C_{k}^s\left(\frac{v'}{u'},\psi\right) \right)\ge \frac{1}{4Q^{2l}}.$$ Hence, $$
 3\C_{k}^s\left(\frac{v}{u},\psi\right) \cap 3\C_{k}^s\left(\frac{v'}{u'},\psi\right)=\emptyset. $$

Repeat the above process with $Q^{l+j}$ for any $1\le j\le N.$ Keep in mind that we  hope to end up with a collection of far separated balls. Thus, we have to drop some balls in each step. We now proceed by induction. Suppose that for some $0\le j \le N-1,$  we have constructed sets $\A_0, \cdots, \A_j$ of rational numbers such that for any $\frac{a}{b}\neq \frac{c}{d}\in \bigcup\limits_{h=0}^{j}\A_h $
$$3\C_{k}^s\left(\frac{a}{b},\psi \right) \cap  3\C_{k}^s\left(\frac{c}{d},\psi \right) =\emptyset. $$

Applying Lemma \ref{keylem} to $Q^{l+j+1}$ and $\frac{1}{2}B,$ we can get a set $\A_{j+1}'$ of rational numbers satisfying
\begin{itemize}
  \item $\# \A'_{j+1} = \left \lfloor \frac{|B|Q^{2(l+j+1)} }{160} \right\rfloor;$
  \item for any $\frac{v}{u} \in \A'_{j+1},$
  $$ Q^l/9 \le u \le Q,\quad  \left|\C_{k}^s\left(\frac{v}{u},\psi\right)\right|=2^{-(k+1)s}\psi^s(u) \quad \text{and} \quad   \C_{k}^s\left(\frac{v}{u},\psi\right)\subset B; $$
  \item for distinct $\frac{v}{u},~\frac{v'}{u'} \in \A'_{j+1}, $
  $$ d\left(3\C_{k}^s\left(\frac{v}{u},\psi\right),3\C_{k}^s\left(\frac{v'}{u'},\psi\right) \right)\ge \frac{1}{4Q^{2(l+j+1  )}}.$$
\end{itemize}Note that for any $0 \leq h \le j$ we have constructed at most $\left\lfloor\frac{|B|Q^{2(l+h)}}{160}\right\rfloor$ balls of the form $\C_{k}^s\left(\frac{p}{q},\psi\right)$ with  radius  at most equal to $ 2^{-(k+1)s}\psi^s(Q^{l+h}/9).$  Let
$$N_{j+1} =\#\left\{\frac{u}{v}\in \A'_{j+1}\colon 3\C_{k}^s\left(\frac{u}{v} ,\psi\right)\cap 3\C_{k}^s\left(\frac{a}{b} ,\psi\right)\neq \emptyset \ \text{for some} \ \frac{a}{b} \in  \bigcup\limits_{h=0}^{j}\A_h \right\}. $$ Thus by \eqref{s0} and \eqref{s3}
\begin{align*}
  N_{j+1}\le & \sum\limits_{h=0}^j \frac{ |B| Q^{2(l+h)}}{160} \left(  {3\cdot 2^{3-(k+1)s}\psi^s(Q^{l+h}/9)Q^{2(l+j+1)} }+2\right) \\
  \le& \frac{|B|Q^{2(l+j+1)}}{80} \left(3\cdot2^{2-(k+1)s}\sum\limits_{h=0}^N Q^{2(l+h)}\psi^s(Q^{l+h}/9)+ \sum\limits_{h=1}^\infty Q^{-2h}\right)\\
  \le & \frac{|B|Q^{2(l+j+1)}}{320}.
\end{align*}Further, by $\# \A'_{j+1} = \left \lfloor \frac{|B|Q^{2(l+j+1)}}{ 160}\right\rfloor,$ there are  at least $\left\lfloor \frac{|B|Q^{2(l+j+1)}}{400}\right\rfloor+1$  points in $A'_{j+1}$ such that the corresponding balls $3\C_{k}^s\left(\frac{u}{v} ,\psi\right)$ are pairwise disjoint and  do not intersect any of the balls $3\C_{k}^s\left(\frac{a}{b} ,\psi\right)$ constructed previously. We write $\A_{j+1}$ for the set of these points. Finally, the induction step is finished.

Combining these, the total  Lebesgue measure of  all  balls in the form of $\C^s_{k}\left(\frac{v}{u},\psi\right)$ for $\frac{v}{u}\in \bigcup_{j=0}^N \A_j$ is at least
\begin{equation*}
  L= \frac{2|B|}{400}\sum\limits_{h=0}^N 2^{-(k+1)s}Q^{2(l+h)}\psi^s(Q^{l+h}).
\end{equation*}
Since \eqref{s2}, \eqref{s3} and $x\mapsto x^2\psi^s(x)$  is non-increasing,
\begin{align*}
  L\ge&\frac{\L(B)}{32400} \sum\limits_{h=0}^N 2^{-(k+1)s}Q^{2(l+h+1)}\psi^s(Q^{l+h+1}/9)\\
  \ge&\frac{\L(B)}{32400}\left(\frac{1}{128} - 2^{-(k+1)s}Q^{2l}\psi^s(Q^{l}/9) \right)\\
  \ge& \frac{\L(B)}{32400} \left( \frac{1}{128} -\frac{1}{1000}\right) \ge 10^{-7}\L(B),
\end{align*} which completes this proof.
\end{proof}

\begin{lem}\label{lem5}
Let $\psi: \R_{>0}\rightarrow \R_{>0}$ be such that $x\mapsto x^2\psi(x)$ is non-increasing and $\psi(x)=o(x^{-2}).$ Then, we have $\L(E(\psi))=1$ if the sum
$$\sum\limits_{n=1}^\infty n\psi(n)$$
diverges, and $\L(E(\psi))=0$ otherwise.
\end{lem}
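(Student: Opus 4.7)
The plan is to split on whether $\sum_{n} n\psi(n)$ converges or diverges. The convergent case is immediate: since $E(\psi) \subset W(\psi)$, Khintchine's theorem (equivalently the $s=1$ case of Theorem \ref{thmj}) gives $\L(W(\psi)) = 0$, hence $\L(E(\psi)) = 0$.

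For the divergent case I would first reduce $E(\psi)$ to a countable intersection by restricting $\epsilon$ to the sequence $2^{-k}$. With $c_k = 1 - 2^{-k}$, set
$$\tilde{E}_k = \left\{x \in [0,1] : c_k\psi(q) < \left|x - \tfrac{p}{q}\right| < \psi(q), \ \text{i.m.\ } (p,q) \in \Z \times \N\right\}.$$
Since the condition tightens as $k$ grows, $E(\psi) = \bigcap_{k \geq 1} \tilde{E}_k$, so it suffices to prove $\L(\tilde{E}_k) = 1$ for each fixed $k$. For this I would apply Lemma \ref{lem2} with $s = 1$; its hypotheses reduce exactly to our monotonicity condition on $x \mapsto x^2\psi(x)$, the decay $\psi(x) = o(x^{-2})$, and the divergence of $\sum n\psi(n)$. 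For any ball $B \subset [0,1]$ and any $Q' \in \N$, the lemma produces a disjoint collection $\{\C_k(v_i/u_i, \psi)\} \subset B$ with $u_i \geq Q'$ and total Lebesgue measure at least $10^{-7}\L(B)$. Writing
$$\tilde{E}_{k,Q'} = \bigcup_{q \geq Q'}\bigcup_{p \in \Z} \left\{x : c_k\psi(q) < \left|x - \tfrac{p}{q}\right| < \psi(q)\right\},$$
each interval $\C_k(v_i/u_i, \psi)$ lies in $\tilde{E}_{k,Q'}$, so $\L(\tilde{E}_{k,Q'} \cap B) \geq 10^{-7}\L(B)$ for every $B$. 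Lebesgue measure on $\R$ is doubling, and $\tilde{E}_{k,Q'}$ is open (hence Borel), so Lemma \ref{lem3} upgrades this local estimate to $\L(\tilde{E}_{k,Q'}) = 1$. Finally, $\tilde{E}_k = \bigcap_{Q'} \tilde{E}_{k,Q'}$ is a countable intersection of full-measure sets, and then so is $E(\psi) = \bigcap_k \tilde{E}_k$.

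The real work is bundled inside Lemma \ref{lem2}, whose proof extracts, from the divergence of $\sum n\psi(n)$ and the Dirichlet-style counting of Lemma \ref{keylem}, a large disjoint family of near-optimal Diophantine intervals at arbitrarily high scale. Once that lemma is available, the remaining ingredients — the reduction $E(\psi) = \bigcap_k \tilde{E}_k$, the local-to-global step via Lemma \ref{lem3}, and the countable intersection of full-measure tails — are formal, so I do not anticipate any further obstruction. The only minor check is measurability of each $\tilde{E}_{k,Q'}$, which is transparent since it is a countable union of open intervals.
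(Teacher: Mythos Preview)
Your proof is correct and uses the same core ingredients as the paper (Lemma~\ref{lem2} with $s=1$ for the local density estimate, and Lemma~\ref{lem3} for the local-to-global upgrade), but the overall organisation differs. The paper does not decouple the $\epsilon$-quantifier from the ``infinitely many'' quantifier: it works directly with $E(\psi)$, defining for each ball $B$ the sets
\[
E_{Q'}=\bigcup_{k\ge Q'}\bigcup_{\frac{v}{u}\in R_{B,Q',k}}\C_k\!\left(\tfrac{v}{u},\psi\right),
\]
so that $k$ and $Q'$ increase together. One then has $\limsup_{Q'}E_{Q'}\subset B\cap E(\psi)$, and Lemma~\ref{lem4} converts the uniform lower bound $\L(E_{Q'})\ge10^{-7}\L(B)$ into $\L(B\cap E(\psi))\ge10^{-14}\L(B)$ before invoking Lemma~\ref{lem3}. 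Your route instead fixes $k$, shows each open tail $\tilde E_{k,Q'}$ already has full measure via Lemma~\ref{lem3}, and then takes two nested countable intersections. This avoids Lemma~\ref{lem4} entirely and is slightly more elementary; the paper's version is more compressed, handling both quantifiers in a single $\limsup$, at the cost of invoking one extra lemma.
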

\begin{proof}
  The convergence part follows from the Khintchine theorem. Thus we only need to prove the divergence part. For any ball $B$ and integer $Q',$  define
  $$E_{Q'}:=\bigcup_{k\ge Q'}\bigcup_{\frac{v}{u} \in R_{B,Q',k}}\C_k\left( \frac{v}{u},\psi\right) .$$ Combining  Lemma \ref{lem2} \eqref{cite1} with the fact that $\#R_{B,Q',k}<\infty,$ we deduce that
  $$\limsup\limits_{Q'\rightarrow\infty } E_{Q'} \subset  B\cap E(\psi).$$
  It follows from Lemma \ref{lem2} \eqref{cite2} and \eqref{cite3} that $\L(E_{Q'}) \ge 10^{-7} \L(B)$ which together with Lemma \ref{lem4} implies that $ \L(\limsup_{Q'\rightarrow \infty} E_{Q'})\ge 10^{-14} \L(B).$  Hence, $\L(B\cap E(\psi))\ge 10^{-14} \L(B).$
Since the measure $\L$ is a doubling measure, the lemma follows directly from Lemma \ref{lem3}.
\end{proof}

\section{Proof of Theorem \ref{thmmeasure}}\label{section1}
This section aims to prove Theorem \ref{thmmeasure}. By Lemma \ref{lem5}, it suffices to consider the case $0<s<1$ with  $x\mapsto x^2\psi^s(x)$ non-increasing and $\psi^s(x)=o(x^{-2}).$

The convergence part in Theorem \ref{thmmeasure} follows directly from Theorem \ref{thmj} (indeed, this is a natural covering argument); the primary challenge lies in the divergence part. To prove the divergence part,  for any constant $\eta >0,$ we construct a Cantor subset $K_{\eta} \subset E(\psi)$ and a probability measure $\nu$ supported on $K_{\eta}.$ This measure $\nu$ satisfies  the scaling property:
\begin{equation}\label{goal}\nu(B(x,r)) \ll \frac{r^s}{\eta} \end{equation}
for any ball $B(x,r)$ with sufficiently small radius $r.$
The implied constant in the $\ll$ is independent of $B(x,r)$ and $\eta$. Thus, by the Mass Distribution Principle (Proposition \ref{p1}), we get
$$\H^s(E(\psi))\ge \H^s(K_{\eta}) \gg \eta .$$ Since $\eta$ can be chosen arbitrarily large, we conclude that
$$\H^s(E(\psi))=\infty=\H^s((0,1)), $$ which completes the proof of Theorem \ref{thmmeasure}.

 Now, we are in a position  to construct the Cantor set $K_{\eta}$ which supports a measure $\nu$ satisfying the desired property.


\subsection{Cantor subset construction}
In this section, we specify the construction of the Cantor set  $K_{\eta}$. We focus on the construction of the first level of $K_{\eta}$, and the other levels can be done similarly. Let $B_0=(0,1).$ Set  $\K_0=\{B_0\}$  and $K_0=B_0.$

{\em The first level $K_1$}

The first level $\K_1(B_0)$ consists of a family of sub-levels $\{\K_1(B_0,l) \}_{l\ge1}.$ We start by defining $\K_1(B_0,1).$

\textbullet \ Step $1_1.$ Notation $U(1,B_0,1).$

Let $$ U(1,B_0,1)=\{B_0\}=\{(0,1)\} .$$ It is trivial  that
$$\sum_{B\in U(1,B_0,1)} \L(B) \asymp |B_0| .$$ Then we will apply Lemma \ref{lem2} to each ball in $U(1,B_0,1).$

\textbullet \  Step $2_1.$  Use Lemma \ref{lem2}.

Let $G_{1,1}$ be  sufficiently large so that
\begin{equation}\label{size1}
 \psi(q)<\psi^s(q)\quad\text{and}\quad \frac{\L\left(\C_1\left(\frac{p}{q},\psi\right)\right) }{\L\left(\C_1^s\left(\frac{p}{q},\psi\right)\right)}<\frac{1}{24} \quad \text{whenever} \quad q\ge G_{1,1}.
\end{equation}This is possible since $0<s<1$  and $\psi^s(x)=o(x^{-2}).$  Set $Q'= G_{1,1}$ and $k=1.$ For each $B\in U(1,B_0,1),$ applying Lemma \ref{lem2} to $B$ yields a collection of well-separated  balls of the form
$$3\C_1^s\left(\frac{p_i}{q_i},\psi\right)\subset B \quad  \text{with}\quad \frac{p_i}{q_i} \in R_{B,G_{1,1},1},$$ which satisfy
$$3\C_1^s\left(\frac{p_i}{q_i},\psi\right) \cap 3\C_1^s\left(\frac{p_j}{q_j},\psi\right)=\emptyset \quad \text{if} \ i\neq j ,$$ and
$$ \L(B) \ge \sum_{\frac{p_i}{q_i}  \in R_{B,G_{1,1},1} }\L\left(\C_1^s\left(\frac{p_i}{q_i},\psi\right)\right)\ge 10^{-7}\L(B) .$$

\textbullet \ Step $3_1.$ Shrinking. Notation $A(1,B,G_{1,1})$ and $K_1(B_0,1).$

For each $B\in U(1,B_0,1),$  we can assert that $$3\C_1\left(\frac{p_i}{q_i},\psi\right)\subset 3\C_1^s\left(\frac{p_i}{q_i},\psi\right)\subset B \quad \text{with} \quad \frac{p_i}{q_i} \in R_{B,G_{1,1},1}.$$
Denote by
$$A(1,B,G_{1,1})=\left\{\C_1\left(\frac{p}{q},\psi\right): \frac{p}{q}\in  R_{B,G_{1,1},1} \right\} .$$
Observe that  for  any two distinct balls $L$ and $\tilde{L}$ in $ A(1,B_0,G_{1,1}),$
$$3L\subset 3L^s\subset B  ~\text{and}~ 3L^s \cap 3\tilde{L}^s =\emptyset.$$ Furthermore,
$$ \L(B)\ge \sum_{L\in A(1,B_0,G_{1,1})}\L(L^s) \ge 10^{-7}{\L(B)}.$$

Set
$$\K_1(B_0,1)=\{ L: L \in A(1,B,G_{1,1}), B\in U(1,B_0,1) \} .$$
Thus, the first sub-level is defined as
$$K_1(B_0,1)=\bigcup_{L\in\K_1(B_0,1)}L=\bigcup_{B\in U(1,B_0,1) }\bigcup_{L\in A(1,B_0,G_{1,1})}L.$$

\textbullet \   Step $4_1.$ Volume estimation.

We check at once that
\begin{equation}\label{mh1}
   \L(B_0)\ge\sum_{L\in \K_1(B_0,1) }\L(L^s)= \sum_{L \in A(1,B_0,G_{1,1})}\L(L^s)\ge 10^{-7}\L(B_0),
\end{equation} since $U(1,B_0,1)=\{B_0\}.$
By \eqref{size1} and \eqref{mh1},
\begin{align*}
  \L\left(\bigcup_{B\in U(1,B_0,1) }\bigcup_{L\in A(1,B_0,G_{1,1})}4L\right)=& \sum_{L \in A(1,B_0,G_{1,1})} \L(4L)\\
    =&4\sum_{L \in A(1,B_0,G_{1,1})} \frac{\L(L)}{\L(L^s)}\L(L^s)\\
    \le& 6^{-1} \sum_{L \in A(1,B_0,G_{1,1})}\L(L^s)
    \le \frac{\L(B_0)}{6}.
  \end{align*}

This completes the construction of the first sub-level. Now we construct the next sub-level by induction. Suppose that the sub-levels
$$K_1(B_0,1), \cdots,  K_1(B_0,l-1)$$ have been well constructed. Each of them has the form, saying
$$ K_1(B_0,i)=\bigcup_{L\in \K_1(B_0,i)}L=\bigcup_{B\in U(1,B_0,i)}\bigcup_{L\in A(1,B,G_{1,i}) }L,$$ and
\begin{equation}\label{m1} \sum_{B\in U(1,B_0,i)}\sum_{L\in A(1,B,G_{1,i}) }\L(4L)\le \frac{1}{6^i}\cdot\L(B_0) ,\end{equation} where
$G_{1,i}>  G_{1,i-1}$ for any $2\le i\le l-1.$

\textbullet \ Step $1_l.$ Notation $U(1,B_0,l).$

Since each sub-level of the construction  is expected  to be distinctly separated from the previous ones,  we first verify  that the set
$$ \widetilde{B}_l = \left(\frac{3}{5}B_0\right)\Big\backslash \bigcup_{i=1}^{l-1}\bigcup_{B\in U(1,B_0,i)}\bigcup_{L\in A(B,G_{1,i},1) }4L$$ is large enough. By \eqref{m1},
we conclude that
\begin{align*}
  \L\left(\bigcup_{i=1}^{l-1} \bigcup_{B\in U(1,B_0,i)}\bigcup_{L\in A(B,G_{1,i},1) }4L\right)\le \sum_{i=1}^\infty \frac{\L(B_0)}{6^i}\le \frac{1}{5}\L(B_0).
\end{align*}Thus,
\begin{equation}\label{meq1}
  \L(\widetilde{B}_l)\ge \frac{1}{5} \L(B_0).
\end{equation}

Set $$r=\min\left\{|L|/5: L \in  \K_1(B_0,i),1\leq i\le l-1\right\} \quad \text{and} \quad \F_l=\{B(x,r):x \in \widetilde{B}_l\}.$$ Thus, by Lemma \ref{5rcover}, we get a disjoint sub-collection $U(1,B_0,l)\subset \F_l$ such that
$$\widetilde{B}_l \subset \bigcup_{B\in \F_l} \subset \bigcup_{B \in U(1,B_0,l)}5B. $$ It follows from the definition of $r$ that
$$ \bigcup_{B\in U(1,B_0,l)}B\subset B_0.$$ Hence, it can be easily verified that the collection $U(1,B_0,l) $ is finite. Besides, for any $B\in U(1,B_0,l),$ we have
\begin{equation}\label{clude1}B \cap  3L =\emptyset \quad \text{for any}\ L \in \bigcup_{i=1}^{l-1}\K_1(B_0,i). \end{equation}


In summary, by construction, the collection $U(1,B_0,l)$ satisfies the following properties.

  a) For any $B\neq B' \in U(1,B_0,l),$ \vspace{-2mm}\begin{equation}\label{con2}B\subset B_0\quad \text{and}\quad B\cap B'=\emptyset.\end{equation}

  \vspace{-4mm}

  b)   The balls in $U(1,B_0,l)$ are thoroughly separated  from any  balls in the previous sub-levels. Specifically, \eqref{clude1} holds.

  c) These balls  almost pack $B_0.$ Indeed,
  \begin{equation}\label{meq2}  \sum\limits_{B \in U(1,B_0,l)} \L(B)\ge \frac{1}{25}\L(B_0).\end{equation}  This is directly deduced from \eqref{meq1} and
  $$ \L(\widetilde{B}_l)\le \sum\limits_{B \in U(1,B_0,l)} \L(5B)=5\sum\limits_{B \in U(1,B_0,l)} \L(B).$$

\textbullet \ Step $2_l.$ Use Lemma \ref{lem2}.

Let  $G_{1,l} > G_{1,l-1}$  be sufficiently large so that
\begin{equation}\label{choose1} \frac{\L\left(\C_1\left(\frac{p}{q}, \psi\right)\right)}{\L\left(\C_1^s\left(\frac{p}{q}, \psi\right)\right)}\leq \frac{1}{4 \cdot 6^l} \quad \text{whenever} \quad q\ge G_{1,l},\end{equation}
and
 \begin{equation}\label{small}\left|\C_1^s\left(\frac{p}{q}, \psi\right)\right|\le  \frac{1}{5}\min_{1\leq i \le l-1} \min_{L \in \K_1(B_0,i)} |L^s| \quad \text{whenever} \quad q\ge G_{1,l}.\end{equation}
Let $k=1$ and $Q'=G_{1,l}.$ For each $B\in U(1,B_0,l),$ apply  Lemma \ref{lem2} to obtain a collection of well-separated balls of the form
$$3\C_1^s\left(\frac{p}{q}, \psi\right)\subset B  \quad \text{with} \quad \frac{p}{q}\in R_{B,G_{1,l},1} .$$  Furthermore,
\begin{equation*}\sum_{\frac{p}{q}\in R_{B,G_{1,l},1}} \L\left(\C_1^s\left(\frac{p}{q}, \psi\right)\right) \ge 10^{-7} \L(B), \end{equation*} and
$$3\C_1^s\left(\frac{p}{q}, \psi\right) \cap 3\C_1^s\left(\frac{p'}{q'}, \psi\right)=\emptyset \quad \text{if} \quad \frac{p}{q}\neq \frac{p'}{q'} \in  R_{B,G_{1,l},1}.$$

\textbullet \ Step $3_l.$ Shrinking. Notation $A(B_0,G_{1,l},l)$ and $K_1(B_0,l).$

Remember that for any $\frac{p}{q}\in R_{B,G_{1,l},1}$ and $B\in U(1,B_0,l)$
$$3\C_1\left(\frac{p}{q},\psi\right) \subset  3\C_1^s\left(\frac{p}{q},\psi\right) \subset B .$$
Let $$A(1,B,G_{1,l})=\left\{\C_1\left(\frac{p}{q},\psi\right):  \frac{p}{q}\in R_{B,G_{1,l},1} \right\}.$$
Thus for any $L\neq L' \in A(1,B,G_{1,l}),$
\begin{equation}\label{con1}3L \subset 3L^s \subset B\quad \text{and}\quad 3L^s\cap 3L'^s=\emptyset, \end{equation} and
\begin{equation}\label{m2}\sum_{L\in A(1,B,G_{1,l}) } \L(L^s) \ge 10^{-7} \L(B). \end{equation} In addition, by \eqref{small},  $$ |L^s|\le \frac{1}{5}|M^s| \quad \text{for any} \ M\in \bigcup_{i=1}^{l-1}\K_1(B_0,i). $$

Define the set
$$\K_1(B_0,l)=\{L: L\in A(1,B,G_{1,l}),B\in U(1,B_0, l)\}. $$ Hence
$$K_1(B_0,l)=\bigcup_{L\in\K_1(B_0,l)}L. $$

\textbullet \   Step $4_l.$ Volume estimation.

On the one hand, by \eqref{meq2} and \eqref{m2}, we can assert that
\begin{align}\label{sim2}
 \sum_{L\in \K_1(B_0,l)} \L(L^s) = &\sum_{B\in U(1,B_0,l) } \sum_{L \in A(1,B,G_{1,l})} \L(L^s)\\ \nonumber
  \ge&\sum_{B\in U(1,B_0,l) } 10^{-7}\L(B) \\ \nonumber
  \ge &10^{-7}\cdot25^{-1} \L(B_0). \nonumber
  \end{align}
 On the other hand, combining  \eqref{con2}, \eqref{choose1} and  \eqref{con1}, we get that
 \begin{align}\label{sim} \sum_{B\in U(1,B_0,l)}\sum_{L\in A(1,B,G_{1,l}) }\L(4L)= &4 \sum_{B\in U(1,B_0,l)}\sum_{L\in A(1,B,G_{1,l}) } \frac{\L(L)}{\L(L^s)}\L(L^s)  \\
 \le & \frac{1}{6^l}\sum_{B\in U(1,B_0,l)}\sum_{L\in A(1,B,G_{1,l}) }\L(L^s) \nonumber\\
 \le & \frac{1}{6^l}\sum_{B\in U(1,B_0,l)}\L(B) \nonumber
 \le  \frac{\L(B_0)}{6^l}\nonumber
 .\end{align}

 This finishes the  induction step. Let $l_{B_0}=\lfloor\eta \cdot \L(B_0)^{-1}\rfloor+1.$ Finally,  define
 $$\K_1=\K_1(B_0)=\bigcup_{i=1}^{l_{B_0}}\K_1(B_0,i)  .$$ Thus, the first level of the Cantor set is defined as
 $$K_1=K_1(B_0)=\bigcup_{L\in \K_1(B_0)} L=\bigcup_{i=1}^{l_{B_0}} \bigcup_{L\in \K_1(B_0,i)} L.$$

{\em The general level $K_n$.}

Assume that $\K_1, \cdots ,\K_{n-1}$ have been  well defined, which is composed of collections of sub-balls of each ball in the previous level. Define
$$\K_n=\bigcup_{B_{n-1}\in \K_{n-1}} \K_n(B_{n-1}) \quad  \text{and} \quad K_n=\bigcup_{L\in \K_n}L ,$$
where \begin{align*} \K_n(B_{n-1})=\bigcup_{i=1}^{l_{B_{n-1}}}\K_n(B_{n-1},i)
=\bigcup_{i=1}^{l_{B_{n-1}}}\{L: L\in A(n,B,G_{n,i}), B \in U(n,B_{n-1},i)\}.\end{align*}

The construction of the $n$-level is the same as that of the first level, we sketch the main steps in the construction process. Replace  $B_0$ with $B_{n-1}$ and repeat the construction route of $\K_1(B_0)$ to obtain $\K_n(B_{n-1}).$ Generally, the route is
$$B_{n-1}\stackrel{cover}{---\longrightarrow} B\in U(n,B_{n-1},l)\stackrel{Lemma \ \ref{lem2}}{---\longrightarrow} 3L^s\subset B \stackrel{Shrinking }{---\longrightarrow}L\in A(n,B,G_{n,l}) \ $$

The only remaining point meriting concern is that we apply Lemma \ref{lem2} to each  $B \in U(n,B_{n-1},l)$ with $k=n$ and $Q'=G_{n,l}>G_{n,l-1}$ ($G_{n,0}=G_{n-1,l_{B_{n-2}}}$), which satisfies

\begin{equation}\label{as}\frac{\L\left(\C_n\left(\frac{p}{q}, \psi\right)\right)}{\L\left(\C_n^s\left(\frac{p}{q}, \psi\right)\right)}\leq \frac{1}{4 \cdot 6^l}  \quad \text{whenever} \quad \ q\ge G_{n,l},\end{equation}
and
$$ \left|\C_n^s\left(\frac{p}{q}, \psi\right)\right|\le  \frac{1}{5}\min_{1\leq i \le l-1} \min_{L \in \K_n(B,i)} |L^s|\quad \text{whenever} \quad q\ge G_{n,l}.$$ It is clear that each ball in $\K_n(B_{n-1},l)$ is of the form
$$\C_n\left(\frac{p}{q},\psi\right) \quad \text{where} \ ~\frac{p}{q} \in R_{B,G_{n,l},n}\ ~\text{and} \ ~B\in U(n,B_{n-1},l).$$

For each $B_{n-1}\in \K_{n-1},$ let $$l_{B_{n-1}}=\lfloor{4|B_{n-1}^s|}\cdot{\L(B_{n-1})^{-1}}\rfloor+1.$$
So,
$$K_n=\bigcup_{B_{n-1}\in\K_{n-1}}\bigcup_{i=1}^{l_{B_{n-1}}}\bigcup_{B\in U(n,B_{n-1},i)} \bigcup_{L\in A(n,B,G_{n,l}) }L.$$
Finally, the desired Cantor set is defined as
$$ K_{\eta} =\bigcap_{n\ge 0} K_n= \bigcap_{n\ge0 } \bigcup_{L_n\in \K_n} L_n.$$

\begin{pro}
We have $K_{\eta} \subset E(\psi).$
\end{pro}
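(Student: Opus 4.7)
Fix $x \in K_\eta = \bigcap_{n \ge 0} K_n$. By the nested construction, for every $n \ge 1$ there is some $L_n \in \K_n$ with $x \in L_n$, and by the final line describing the general level, $L_n$ has the form $\C_n(p_n/q_n,\psi)$ for some rational $p_n/q_n$ with $q_n \ge G_{n,l}$ for an appropriate $l$. The plan is to unpack the definition of $\C_n(p_n/q_n,\psi)$ and observe that the sequence $c_n = 1-2^{-n}$ used in the construction is tailored precisely to meet the two-sided bound in the definition of $E(\psi)$.

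First I would verify that $q_n \to \infty$ as $n\to\infty$. This is a book-keeping step: at the level-$n$ stage the parameter $Q'$ fed into Lemma \ref{lem2} is $G_{n,l}$, and the construction forces $G_{n,0} = G_{n-1,l_{B_{n-2}}}$ with the $G_{n,l}$'s strictly increasing in both indices (they were chosen large enough to satisfy \eqref{as} and the analogous size condition). Consequently $q_n \ge G_{n,0}$ diverges to infinity, so the pairs $(p_n,q_n)$ are pairwise distinct for all sufficiently large $n$.

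Next, membership $x \in \C_n(p_n/q_n,\psi)$ unwinds directly to
$$ c_n\,\psi(q_n) \;<\; x - \frac{p_n}{q_n} \;<\; \psi(q_n), $$
with $c_n = 1 - 2^{-n}$. Now fix any $0 < \epsilon < 1$ and choose $N$ with $2^{-N} < \epsilon$. Then for every $n \ge N$ we have $c_n > 1 - \epsilon$, whence
$$ (1-\epsilon)\,\psi(q_n) \;<\; \left| x - \frac{p_n}{q_n}\right| \;<\; \psi(q_n). $$
Because $q_n \to \infty$, this inequality is satisfied by infinitely many rationals $p_n/q_n$. Since $\epsilon$ was arbitrary, $x$ lies in every set appearing in the intersection defining $E(\psi)$, and therefore $x \in E(\psi)$.

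There is no genuine obstacle here: once the Cantor construction is in hand, the statement reduces to the two observations that (i) $q_n$ diverges along the nested sequence of Cantor pieces containing $x$, and (ii) the shrinking parameter $c_n = 1 - 2^{-n}$ eventually dominates any fixed $1-\epsilon$. The only item requiring a little care is (i), which is a direct consequence of the monotonicity baked into the choice of the thresholds $G_{n,l}$ during the construction of $K_\eta$.
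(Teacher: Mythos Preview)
Your proposal is correct and follows essentially the same approach as the paper: identify, for each $n$, the ball $\C_n(p_n/q_n,\psi)\in\K_n$ containing $x$, observe that $c_n=1-2^{-n}$ eventually exceeds any fixed $1-\epsilon$, and check that infinitely many distinct pairs $(p_n,q_n)$ arise. The paper secures this last point by stipulating that the $G_{n,l}$ are chosen so large that the rational sets $R_{B,G_{n,l},n}$ at different stages are disjoint, whereas you obtain it from the monotone growth of the thresholds $G_{n,l}$ forcing $q_n\to\infty$; these are the same idea phrased slightly differently.
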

\begin{proof}
For any sufficiently small $\epsilon>0,$ there exists $n_0\ge1$ such that $(1-\epsilon)<(1-2^{-n})$ holds for each $n\ge n_0.$ Given a point $x\in K_{\eta },$ naturally, we get $$x \in \bigcap_{n\ge n_0}K_n.$$ By  construction, for each $n\ge n_0,$ there exist $B_{n-1}\in \K_{n-1},$ $1\le l\le l_{B_{n-1}}$ and $B\in U(n,B_{n-1},l)$ such that $x\in \C_n\left(\frac{p}{q},\psi\right)$ for some $\frac{p}{q}\in R_{B,G_{n,l},n},$ i.e.
$$ (1-2^{-n})\psi(q)<x-\frac{p}{q} <\psi(q).$$ Since $q\ge G_{n,l},$ we can assume that $G_{n,l}$ is so large that the  set $R_{B,G_{n,l},n}$ is disjoint from any other  sets  $R_{B',G_{n',l'},n'}$ appearing before. Thus, we can assert that $x\in E(\psi)$ and this completes the proof.
\end{proof}
Next, we summarise the properties of the Cantor set constructed above for later use.
\begin{pro}\label{keypro}
  Let $n\ge1.$
  \begin{enumerate}[(i)]
    \item For any $B_{n-1}\in \K_{n-1},$ we have
        $$3L\subset 3L^s \subset B_{n-1}  \quad \text{and} \quad 3L\cap 3L' = \emptyset , \quad \text{if } L\neq L' \in \K_n(B_{n-1}) .$$
    \item For any $B_{n-1} \in  \K_{n-1}$ and $1\le l \le l_{B_{n-1}},$ the balls in $$\left\{L^s: L \in \K_n(B_{n-1},l)\right\}$$ are well separated and contained in $B_{n-1}.$ Indeed, for any $L\neq \tilde{L} \in \K_n(B_{n-1},l)$ $$ 3L^s\cap3\tilde{L}^s= \emptyset .$$
    \item For any $B_{n-1} \in  \K_{n-1}$ and $1\le l \le l_B,$
    $$\sum_{L\in \K_n(B_{n-1},l)} \L(L^s) \gg \L(B_{n-1}), $$
    where the implied constant in the Vinogradov symbol ($\gg$) is absolute.
    \item For any $B_{n-1} \in  \K_{n-1},$  $1\le l < l_{B_{n-1}},$ and $ L\in \K_{n-1}(B_{n-1},l)$
    $$ |M^s| \le \frac{1}{5} |L^s| \quad \text{if} \quad   M\in \K_{n-1}(B_{n-1},l+1).   $$
    \item The number of local sub-levels  is determined by
    $$   l_B=\left\{\begin{array}{ll}
   \lfloor\eta \cdot \L(B)^{-1}\rfloor+1 &\text{if}\quad B\in \K_0, \\
   \lfloor{4|B^s|}\cdot{\L(B)^{-1}}\rfloor+1 &\text{if}\quad B \in \K_n , n\ge1.
    \end{array}
  \right.$$

  \end{enumerate}
\end{pro}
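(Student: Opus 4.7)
The plan is to verify each of (i)--(v) by direct bookkeeping through the construction of $\K_n(B_{n-1},l)$, since the proposition is essentially a structural summary of what was just built. I would proceed in the order (v) $\to$ (iv) $\to$ (ii) $\to$ (iii) $\to$ (i). Item (v) is immediate from the definitions of $l_{B_0}$ and $l_{B_{n-1}}$ stated at the ends of the \emph{first level} and \emph{general level} constructions. Item (iv) follows from choosing $G_{n,l}$ large at Step~$2_l$: by \eqref{small} and its higher-level analogue, every $M \in \K_n(B_{n-1},l+1)$ satisfies $|M^s| \le \tfrac{1}{5}|L^s|$ for each $L$ at sub-level $l$.

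For item (ii), every $L \in \K_n(B_{n-1},l)$ is of the form $\C_n(p/q,\psi)$ with $p/q \in R_{B,G_{n,l},n}$ for some $B \in U(n,B_{n-1},l)$. Lemma~\ref{lem2}(1)--(2) directly yields $3L^s \subset B \subset B_{n-1}$ and the disjointness $3L^s \cap 3\tilde{L}^s = \emptyset$ whenever $L,\tilde{L}$ arise from the same $B$. When they arise from distinct $B \neq B' \in U(n,B_{n-1},l)$, the claim reduces to $B \cap B' = \emptyset$, which is \eqref{con2}. Item (iii) is obtained by chaining Lemma~\ref{lem2}(3) on each $B$ with the packing estimate $\sum_{B \in U(n,B_{n-1},l)} \L(B) \gg \L(B_{n-1})$ established in Step~$1_l$ (cf.~\eqref{meq2}); the case $l=1$ is trivial since $U(n,B_{n-1},1) = \{B_{n-1}\}$ by construction.

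Item (i) is then obtained by combining (ii) across sub-levels. Within a single sub-level, disjointness follows from (ii) together with $3L \subset 3L^s$, which holds because $\psi(q) \le \psi^s(q)$ for $\psi(q) < 1$ and $s \in (0,1]$ (cf.~\eqref{size1}). Across different sub-levels $l < l'$, the mechanism is the definition of $\widetilde{B}_{l'}$ at Step~$1_{l'}$, which excises a $4L$-\emph{neighbourhood} of each prior sub-level ball $L$; this guarantees $B \cap 3L = \emptyset$ for every $B \in U(n,B_{n-1},l')$ (cf.~\eqref{clude1}), and then $3L' \subset 3L'^{s} \subset B$ yields $3L \cap 3L' = \emptyset$. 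The proof presents no real obstacle beyond careful unpacking; the only subtle point is that the enlargement by $4$ (rather than by $3$) in the exclusion step is exactly what produces the $3$-disjointness asserted in (i), and I would take care to trace this buffer through explicitly.
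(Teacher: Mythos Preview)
Your proposal is correct and follows essentially the same route as the paper's own proof: the paper also declares (ii), (iv), (v) to be immediate from the construction and focuses on (i) and (iii), handling (i) by the same case split (same sub-level via the $3L^s$-disjointness from Lemma~\ref{lem2}, different sub-levels via the exclusion step \eqref{clude1}) and deriving (iii) by combining Lemma~\ref{lem2}\eqref{cite3} with the packing bound $\sum_{B\in U(n,B_{n-1},l)}\L(B)\gg\L(B_{n-1})$. The only point the paper makes slightly more explicit than you do is that this packing bound at level $n$ requires the inductive volume control $\sum_{L}\L(4L)\le 6^{-l}\L(B_{n-1})$ (the analogue of \eqref{sim}, derived from \eqref{as}); you should spell that out rather than simply citing \eqref{meq2}, which is stated only for $n=1$.
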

\begin{proof}We only need to show the items $(i)$ and $(iii),$ since others are trivial by the construction of $K_{\eta}.$  We first prove the item $(i).$ By construction, there are $l,l'\in \{1,\cdots,l_{B_{n-1}}\}$ such that $L\in \K_n(B_{n-1},l)$ and $L'\in \K_n(B_{n-1},l').$ Suppose that $l\le l'.$

Recall that the construction process of $L,L':$
\begin{align*}&B_{n-1}\xrightarrow[]{covering} B\in U(n,B_{n-1},l)\xrightarrow[]{Lemma \ \ref{lem2}} L^s\subset B\xrightarrow[]{Shrinking} L\in  A(n,B,G_{n,l}) \\
&B_{n-1}\xrightarrow[]{covering} B'\in U(n,B_{n-1},l')\xrightarrow[]{Lemma \ \ref{lem2}} L'^s\subset B'\xrightarrow[]{Shrinking} L'\in  A(n,B',G_{n,l'})) .\end{align*} Note that the balls
$$\{B: B\in U(n,B_{n-1},l)\} $$ are disjoint
and the sub-balls
$$\{3L:L\in A(n,B,G_{n,l})\} $$ of $B$ are also disjoint. Thus, if $l=l',$ then $3L\cap 3L' = \emptyset.$  For $l<l',$ it follows from the construction of $U(n,B_{n-1},l')$ that $ 3L\cap B'=\emptyset$ for all $L \in \bigcup_{i=1}^{l'-1}\K_n(B_{n-1},i).$ Thus $3L\cap 3L' = \emptyset$ when $l<l'$ and it establishes item $(i).$

We now turn to prove $(iii).$ By the construction of $U(n,B_{n-1},l),$
\begin{align}\label{rh} \L\left(\frac{3}{5}B_{n-1}\Big\backslash \bigcup_{i=1}^{l-1}\bigcup_{B\in U(n,B_{n-1},i)}\bigcup_{L\in A(n,B,G_{n,i}) }4L \right)&\le \sum\limits_{B \in U(1,B_{n-1},l)} \L(5B) \\
&=5\sum\limits_{B \in U(n,B_{n-1},l)} \L(B).\nonumber
\end{align}Based on \eqref{as} and the same estimation as \eqref{sim}, by induction we obtain
$$ \L \left( \bigcup_{B\in U(n,B_{n-1},l)}\bigcup_{L\in A(n,B,G_{n,l}) }4L \right)\le \frac{1}{6^l}\cdot\L(B) \quad \text{for any } \quad  1\le l \le l_B.$$  Thus, the left-hand side of inequality \eqref{rh} is at least $5^{-1}\L(B).$ Hence,
 \begin{equation}\label{g1}\sum\limits_{B \in U(n,B_{n-1},l)} \L(B) \ge \frac{1}{25}\L(B_{n-1}). \end{equation} Note that \begin{equation}\label{g2}\sum_{L \in A(n,B,G_{n,l})} \L(L^s) \ge \frac{1}{10^{7}}\L(B) .\end{equation} Finally, by \eqref{g1}, \eqref{g2} and the same estimate as  \eqref{sim2}, we get the third item.
\end{proof}

\subsection{Mass distribution}
In this section, we define a probability measure $\nu$ supported on $K_{\eta}.$

For $n=0,$ $\K_0$ contains only one element $L=(0,1).$ Naturally, $\nu(L):=1. $

For $n\ge1$ and a ball $L \in \K_n,$ there exists a unique ball $B_{n-1}\in \K_{n-1}$ and $l \in \{1,\cdots,l_{B_{n-1}}\}$ such that $$ L \in \K_n(B_{n-1},l).$$  Assume that the measure of balls in $\K_{n-1}$  have been well  defined. Then, define
\begin{equation}\label{e1} \nu(L) = \frac{\L(L^s)}{ \sum_{i=1}^{l_{B_{n-1}}} \sum_{\tilde{L}\in \K_n(B_{n-1},i)} \L(\tilde{L}^s)} \cdot \nu(B_{n-1}).\end{equation}

By Kolmogorov's consistency theorem, the set function $\nu$ can be uniquely extended into a probability measure supported on $K_{\eta}$. Actually, for any $A\subset [0,1],$
$$\nu(A)=\nu(A\cap K_{\eta}) :=\inf \left\{\sum_{i\ge1} \nu(L_i): A\subset \bigcup_{i\ge1}L_i, L_i \in \bigcup_{n\ge0}\K_n \right\}.$$

 \subsection{The measure of a ball in $\K_n$}
 Our goal in this section  is to prove that for any $L \in \K_n$ and $n\ge1$ we have
\begin{equation}\label{goali}\nu (L)\ll \frac{|L|^s}{\eta} . \end{equation}

We begin with the first level. By construction, for any $L\in \K_1,$ we have $L \in \K_1(B_0, l)$ for some $l \in \{1,\cdots,l_{B_0}\},$ since $\K_0=\{B_0\}.$ Thus, by \eqref{e1} and $\nu(B_0)=1,$ we get that
\begin{align*}
  \nu(L)=\frac{\L(L^s)}{\sum_{l=1}^{l_{B_0}} \sum_{\tilde{L}\in \K_1(B_0,l)}\L(\tilde{L}^s )}.
\end{align*}Combining the items $(iii)$  and $(v)$ of Proposition \ref{keypro},  we conclude that
\begin{align*}
  \nu(L)\ll \frac{\L(L^s)}{l_{B_0} \cdot\L(B_0)}\ll \frac{|L^s|}{\eta}=\frac{|L|^s}{\eta}.
\end{align*}

We show by induction that \eqref{goali} still holds for all $n\in\N.$ Assume that \eqref{goali} holds for all $L\in \K_{n-1}.$   It is easy to check that for each $L \in \K_n $ there exists unique $B_{n-1} \in \K_{n-1}$ and $l \in \{1,\cdots,l_{B_{n-1}}\}$ such that $ L \in \K_n(B_{n-1},l).$ By definition and our hypothesis,
\begin{equation*}\nu(L)  \ll \frac{\L(L^s)}{ \sum_{l=1}^{l_{B_{n-1}}} \sum_{\tilde{L}\in \K_n(B_{n-1},l)} \L(\tilde{L}^s)} \cdot \frac{|B_{n-1}|^s}{\eta}.\end{equation*} Using Proposition \ref{keypro} $(iii)$  and  Proposition \ref{keypro} $(v),$ we get that
\begin{equation}\label{ele}\nu(L)  \ll \frac{\L(L^s)}{ l_{B_{n-1}} \cdot \L(B_{n-1})} \cdot \frac{|B_{n-1}|^s}{\eta}\ll  \frac{|L|^s}{\eta}.\end{equation} This finishes the inductive step and thereby completes the claim.

 \subsection{Measure of a general ball}
Recall that $|B|$ denotes the radius of a ball $B.$ Let $r_0=\min\{|L|: L\in \K_1 \}.$
In this section, we will show  \eqref{goal} holds for any ball $B(x,r)$ with $r\le r_0,$ i.e.
\begin{equation}\label{fg}\nu(B(x,r)) \ll \frac{r^s}{\eta} .\end{equation} Then by Mass Distribution Principle (Proposition \ref{p1}), we get the desired consequence.

Without loss of generality we can assume that $B(x,r)\cap K_{\eta} \neq \emptyset;$ otherwise $\nu(B(x,r))=0,$ since $\nu$  is supported on $K_{\eta}.$  If for any $n\ge0$ the ball $B(x,r)$  intersects only one element in $\K_n,$ then by \eqref{ele} we have also $\nu(B(x,r))=0.$  Thus, we can assume that there exists a unique  integer $n$ such that $B(x,r)$ intersects exactly one element  in $\K_{n-1},$ which we denote by $B_{n-1},$ and at least two elements in  $\K_n,$ which we denote by $L$ and $M,$ respectively.

By hypothesis and Proposition \ref{keypro} $(i),$ we see that $$ B(x,r)\cap L\neq \emptyset, \quad  B(x,r)\cap M\neq \emptyset \quad \text{and} \quad 3L\cap3M=\emptyset .$$ Thus it follows that
\begin{equation}\label{radius1}|L|< r \quad \text{and} \quad |M|< r.  \end{equation} Hence $n\ge2.$ We can further assume that
\begin{equation}\label{ieqrad}
  r\le |B_{n-1}|,
\end{equation} otherwise it is evident that
$$ \nu(B(x,r))\le \nu(B_{n-1}) \ll\frac{|B_{n-1}|^s}{\eta} \le \frac{r^s}{\eta}.$$

Note that from \eqref{ele} we obtain
\begin{align}\label{f4}\nu(B(x,r)) &\le \sum_{i=1}^{l_{B_{n-1}}}\sum_{\substack{L \in \K_n(B_{n-1},i)\\ L\cap B(x,r) \neq \emptyset}}\nu(L) \\
&\ll \sum_{i=1}^{l_{B_{n-1}}}\sum_{\substack{L \in \K_n(B_{n-1},i)\\ L\cap B(x,r) \neq \emptyset}}\frac{|L|^s}{\eta}. \nonumber
\end{align}  Thus, we argue by distinguishing two cases.

\medskip
{\em Case 1.} Sub-levels $\K_n(B_{n-1},i)$ for which only one element can intersect $B(x,r).$

 Denote $i^*$ by the smallest integer $i$ such that there exists $L \in \K_n(B_{n-1},i )$ satisfying $L\cap B(x,r)\neq \emptyset.$ Naturally, $L\cap B(x,r)=\emptyset$ for any $L\in \K_n(B_{n-1},i)$ with $i<i^*$  and there exists  a unique ball $L^* \in \K_n(B_{n-1},i^*)$ such that $L^*\cap B(x,r)\neq \emptyset.$ Hence, combining \eqref{radius1} with Proposition \ref{keypro} $(iv),$ we conclude that
 \begin{align}\label{f3}
   \sum_{i \in \text{\it Case 1}}\sum_{\substack{L\in \K_{n}(B_{n-1},i)\\ L\cap B(x,r) \neq \emptyset}} \frac{|L|^s}{\eta}& \leq  \sum_{i \in \text{\it Case 1}} \frac{1}{5^{i-i^*}} \frac{|L^*|^s}{\eta} \\
   & \ll \frac{|L^*|^s}{\eta} \leq \frac{r^s}{\eta}.\nonumber
   \end{align}

\medskip
{\em Case 2.} Sub-levels $\K_n(B_{n-1},i)$ for which at least two elements can intersect $B(x,r).$

Since we are in  case $2,$ we can assume that $L, M \in \K_n(B_{n-1},i).$ By
Proposition \ref{keypro} $(ii),$ $$3L^s \cap  3M^s = \emptyset. $$ Keep
in mind that
$$B(x,r)\cap L\neq \emptyset, \quad B(x,r)\cap M \neq \emptyset \quad \text{and} \quad L\subset L^s. $$ Thus,  by a simple geometric observation, we can assert that \begin{equation}\label{f1}  L^s \subset 3B(x,r).\end{equation} For any balls $B,$ it is obvious that $|B|^s=|B^s|$ and $\L(B)  \asymp |B|.$ Then, from \eqref{f1} and Proposition \ref{keypro} $(v),$ we get that
\begin{align}\label{f2}
  \sum_{i\in \text{Case } 2} \sum_{\substack{L\in \K_n(B_{n-1},i)\\L\cap B(x,r)\neq \emptyset}} \frac{|L^s|}{\eta}\ll \sum_{i\in \text{Case } 2} \frac{\L(B(x,r))}{\eta}& \ll l_{B_{n-1}}\cdot\frac{r}{\eta}\\&
  \ll \frac{|B_{n-1}|^s}{|B_{n-1}|} \cdot \frac{r}{\eta} \le \frac{r^s}{\eta}. \nonumber
\end{align} The last inequality follows from the function $x\mapsto x^{s-1}$ is decreasing and \eqref{ieqrad}.

 Combining \eqref{f4},\eqref{f3} and \eqref{f2}, we obtain the desired  result \eqref{fg}.

\begin{rem}
    An alternative proof of the divergence  of the Hausdorff measure of the set  $E(\psi)$ can  be deduced by combining a
 variation of inhomogeneous Diophantine approximation (see Theorem 2.1 of \cite{HW25}) with Cassels' scaling lemma.
\end{rem}

\section{Proof of Theorem \ref{mainthm}}
In this section, we will focus  on the proof of Theorem \ref{mainthm}. The upper bound for the Hausdorff dimension of $E(\psi_1) \times \cdots \times E(\psi_n)$ follows easily from  Theorem \ref{thm2}, since $E(\psi_i)\subset W(\psi_i)$ and $W(\psi_i)$ is dense in $\R.$ The main difficulty lies in determining the lower bound. To this end, at first, we construct a Cantor subset $F_\infty$; secondly, define a proper mass distribution $\mu$ supported on $F_\infty;$
thirdly, estimate the $\mu$ measure of the general ball; finally, we obtain the expected lower bound by applying the Mass Distribution Principle.

\subsection{Cantor subset}


We sketch the main ideas about the construction of the Cantor subset. In what follows, a ball $B(\x,r)\subset [0,1]^n$ is of the form $$B(\x,r)=\prod_{i=1}^n B(x_i,r)\ \text{for} \ \x=(x_1,\cdots, x_n)\in [0,1]^n, $$
      where $B(x_i ,r)$ is a ball in $[0,1]$ and  is called the $i$-th direction of the ball $B(\x,r).$ Recall that $|B|$ denotes the radius of a ball $B.$
\begin{itemize}
  \item We begin with $[0,1]^n.$ Applying Lemma \ref{keylem} to $(0,1)$ in the first direction of $[0,1]^n$ yields a family of rational numbers
      $$\frac{p_{1,i}}{q_{1,i}}\in J_{[0,1]}.$$ For each $\frac{p_{1,i}}{{q_{1,i}}},$  we  obtain
      $$\C_1\left( \frac{p_{1,i}}{q_{1,i}},\psi_1\right).$$
  \item We lift these balls in the first direction of $[0,1]^n$ to rectangles in $[0,1]^n.$
  \item Divide these rectangles into balls $B_1$ in $[0,1]^n$. This yields the first level of the desired Cantor subset.
  \item For the second level, we repeat the same steps for each ball in the first level. The only difference is that we apply Lemma \ref{keylem} to the ball  in the second direction of $B_1.$
\end{itemize}

We now give the construction of the Cantor set in detail.
Keep in mind that functions $\psi_l:\R_{> 0}\rightarrow \R_{> 0} $ ($1\le l \le n $) are non-increasing and satisfy $\psi_l(x)=o(x^{-2}).$ Thus there exists an integer $Q_0$ such that $ q^2\psi_l(q)<1/1000$ for all $q\ge Q_0 $ and all $1\le l \le n.$ From now on, $\psi_0$ stands for $\psi_n.$

Let $\epsilon >0 $ be given. We inductively construct a  strictly increasing sequence $\{Q_k\}_{k\ge1}$ of positive integers that grows rapidly.  Set
\begin{align}\label{measue1} Q_1 > \max\{20000\log2, 9Q_0 \} \ \text{and}
\ -\log \psi_1(Q_1)\le \frac{2\lambda_1}{2-\epsilon\cdot\lambda_1}\cdot  \log Q_1.\end{align}
Suppose that $k=tn+l>1$ for some integer $t\ge 0$ and $1\leq l \leq n,$ let
\begin{equation}\label{c1}Q_k>\max \{2^{17+t} \psi_{l-1}(Q_{k-1})^{-1}\log (2^{t+2}\psi_{l-1}(Q_{k-1})^{-1}),Q_{k-1}\}\end{equation} and
\begin{align}\label{ms2}-\log \psi_{l}(Q_k) \le \frac{2\lambda_{l}}{2-\epsilon\cdot\lambda_{l}} \cdot \log Q_k
.\end{align}

{\em Level $1$ of the Cantor set.}

  \textbullet \ Step 1: Apply Lemma \ref{keylem}. Due to \eqref{measue1}, we can apply lemma \ref{keylem} to $(0,1)$ and $Q_1.$ Then, we get  a family of  $\frac{1}{2Q_1}$-separated balls in $[0,1]$:
$$\left\{\C_1\left(\frac{p_{1,i}}{q_{1,i}},\psi_1\right)\right\}_{1\leq i \leq \left\lfloor{160^{-1}} { Q_1^2} \right\rfloor} \quad \text{with} \quad \frac{p_{1,i}}{q_{1,i}}\in J_{(0,1)}.$$
Since we are constructing subsets of $[0,1]^n,$ we lift these balls to $[0,1]^n.$ For each $\frac{p_{1,i}}{q_{1,i}}\in J_{(0,1)},$ we define

\begin{align}\label{R1}R_{1,i}&=\C_1\left(\frac{p_{1,i}}{q_{1,i}},\psi_1\right) \times \underbrace{[0,1] \times \cdots \times[0,1]}_{n-1}\\
 &=:B(x_{1,i},r_{1,i})\times B(x_{2},r) \times \cdots \times B(x_{n},r)  \nonumber.\end{align} Keep in mind that for each $1\le i \le \left\lfloor{160^{-1}} { Q_1^2} \right\rfloor,$
\begin{equation}\label{sq1} \frac{ Q_1}{9} \le q_{1,i} \le Q_1\quad \text{and} \quad  r_{1,i}= \left| \C_1\left(\frac{p_{1,i}}{q_{1,i}},\psi_1\right)\right|=\frac{\psi_1(q_{1,i})}{4} .\end{equation} Owing to \eqref{sq1}, the choice of $Q_0$ and the monotonicity of $\psi_1,$  one has
\begin{equation}\label{new1}\frac{\psi_1(Q_1)}{4}\le r_{1,i}\le \frac{\psi_1({Q_1}/{9})}{4}<\frac{1}{20Q_1^2} .\end{equation}
Combining \eqref{new1} with the separation property of $\left\{B(x_{1,i},r_{1,i})\right\} ,$ we conclude that for any $i\neq j$
\begin{equation}\label{sq2}
  3B(x_{1,i},r_{1,i})\cap 3B(x_{1,j},r_{1,j}) =\emptyset.
\end{equation}
Denote the collection of these rectangles by $R(1,[0,1]^n)$ and the index set of these rectangles by $R_*(1,[0,1]^n)$.

We account for subscripts in the set $A_{1,i},$  the real numbers $r_{1,i}$ and the point $x_{1,i}$:  $i$  indicates  an integer in the index set $R_*(1,[0,1]^n)$; the first $1$ in the set $A_{1,i}$ and $r_{1,i}$  indicate that we are at the first level of the Cantor set, however the first $1$ in  $x_{1,i}$ indicate that we are in the first direction of $[0,1]^n.$

\textbullet \ Step 2: Division.

  Set $r_{1,i}:=\frac{\psi_1(q_{1,i})}{4}.$
  For each $\frac{p_{1,i}}{q_{1,i}} \in J_{(0,1)},$ we divide the lifted rectangle $R_{1,i}$ in \eqref{R1} into balls of radius $r_{1,i}.$
  Indeed, during the cutting process, we keep the ball in the first direction of $R_{1,i}$ unvaried, since $r_{1,i}$ is just the radius of the ball in the first direction of $R_{1,i}.$

  Thus,  by Lemma \ref{5rcover}, we get a  family  of  $5r_{1,i}$-separated balls  with the form
  \begin{equation}\label{Q1} B_{1,i}=B_{1,i}(\z):=B(z_{1}, r_{1,i})\times B(z_{2}, r_{1,i})\times \cdots \times B(z_{n}, r_{1,i}),\end{equation}  where
   $B(z_1, r_{1,i})=\C_1\left(\frac{p_{1,i}}{q_{1,i}},\psi_1\right).$  Denote the collection of these balls by $A(1,[0,1]^n,i)$ and their centers by $A_*(1,[0,1]^n,i)$.

   By the definition of $A(1,[0,1]^n,i),$ we get that for any two distinct balls $B_{1,i}(\z),B_{1,i}(\z') $ in $A(1,[0,1]^n,i)$
  \begin{equation}\label{sq3} 3B_{1,i}(\z) \cap 3B_{1,i}(\z') =\emptyset, \end{equation}
   and
   $$ \# A(1,[0,1]^n,i) \cdot r^n_{1,i}\asymp \sum_{B_{1,i}(\z) \in A(1,[0,1]^n,i) } \L (B_{1,i}(\z)) \asymp \L(R_{1,i} )\asymp r_{1,i} .$$
Thus, $$\# A(1,[0,1]^n,i) \asymp \frac{1}{r_{1,i}^{n-1}}  .$$

   Set
   $$\F_1=\left\{\prod_{l=1}^nB(z_{l},r_{1,i}):  \z \in A_*(1,[0,1]^n,i),i\in R_*(1,[0,1]^n) \right\} .$$
   Then, the first level of the Cantor set is defined as
   $$F_1=\bigcup_{B\in \F_1 } B=\bigcup_{i\in R_*(1,[0,1]^n)} \bigcup_{\z \in A_*(1,[0,1]^n,i) } \prod_{l=1}^nB(z_{l,i},r_{1,i}) .$$
   In summary,
   \begin{itemize}
     \item The balls in $\F_1$ are all  of the form \eqref{Q1}. For any two different  balls $B, B'\in \F_1,$  by \eqref{sq2} and \eqref{sq3}, we have
         $$3B\cap 3B'=\emptyset .$$ In particular, the balls in the first direction of any two distinct rectangles $R_{1,i}$ and $R_{1,j}$ in $\F_1$ are $\frac{1}{2Q_1^2}$-separated.

     \item  The radius of any ball in $\F_1$ is at most $\frac{1}{4}\psi_1(\frac{Q_1}{9})$ and at least $\frac{1}{4}\psi_1(Q_1).$ Moreover, the balls contained in $R_{1,i}$ have the same radius $r_{1,i}.$
     \item For any $i \in R_*(1,[0,1]^n),$ we have    $$  \# A(1,[0,1]^n,i)\asymp \frac{1}{r_{1,i}^{n-1}}  .$$ 

   \end{itemize}

{\em Level $2$ of the Cantor set.}
The construction of the second level follows the same way as that of the first level. Here, the only distinction is that  Lemma \ref{keylem} is applied  to the ball in the second direction of a generic ball $B_1\in\F_1.$ Given a  ball
$$B_1=\prod_{i=1}^n B(x_i,r)\in \F_1.$$

 \textbullet \ Step 1: Apply Lemma \ref{keylem}.
According to  the construction of the first level and \eqref{c1}, we get that
 $$Q_2\ge \max \{10000 r^{-1}\log (r^{-1}),9Q_0\} .$$ Then, applying Lemma \ref{keylem} to $B(x_2,r)$ and $Q_2,$  we get a family of $\frac{1}{2Q_2^2}$-separated balls in $B(x_2,r):$
$$\left\{\C_1\left(\frac{p_{2,i}}{q_{2,i}}, \psi_2\right)\right\}_{1\leq i \leq \lfloor{ r\cdot Q_1^2}/{80}\rfloor}\quad \text{with} \quad \frac{p_{2,i}}{q_{2,i}}\in J_{B(x_2,r)},$$ whose radius is at most $\frac{1}{4}\psi_{2}(Q_2/9)$ and at least $\frac{1}{4}\psi_{2}(Q_2).$

For each $\frac{p_{2,i}}{q_{2,i}}\in J_{B(x_2,r)},$ define the set
\begin{equation}\label{R2} R_{2,i}=B(x_1,r)\times \C_1\left(\frac{p_{2,i}}{q_{2,i}},\psi_2\right) \times \cdots \times B(x_n,r). \end{equation} Denote the collection of these rectangles by $R(2,B_1)$ and the index set of these rectangles by $R_*(2,B_1)$.

\textbullet \ Step 2: Division.

Let $r_{2,i}=\frac{\psi_2(q_{2,i})}{4}.$ Cut the rectangle in \eqref{R2} into balls of radius $r_{2,i}.$ Keep in mind that the ball in the second direction is invariant. By Lemma \ref{5rcover}, we obtain  a  collection of $5r_{2,i}$-separated balls of radius $r_{2,i}$:
  \begin{equation*}\label{Q2} B_{2,i}:=B(z_1, r_{2,i})\times B(z_2, r_{2,i})\times \cdots \times B(z_n, r_{2,i}),\end{equation*}  where
   $B(z_2, r_{2,i})=\C_1\left(\frac{p_{2,i}}{q_{2,i}},\psi_2\right).$ Denote by $A(2,B_1,i)$ the collection of these balls   and $A_*(2,B_1,i)$ for their centers.

   The second level is composed of collections of subsets of each
ball $B_1\in \F_1.$ Define
   $$ \F_2=\left\{\prod_{l=1}^nB(z_l,r_{2,i}): i\in R_*(2,B_1),\  \z \in A_*(2,B_1,i), \ B_1 \in \F_1 \right\} .$$
   Hence, the second level of the Cantor set is set as
   $$F_2=\bigcup_{B\in \F_2 } B =\bigcup_{B_1\in \F_1 }\bigcup_{i\in  R_*(2,B_1)} \bigcup_{\z \in A_*(2,B_1,i) } \prod_{l=1}^nB(z_l,r_{2,i}).$$

{\em From level $k-1$ to  level $k$.}

It is without loss of generality to assume that $k=tn+l$ for some integer $t\ge 0$ and $1\leq l \leq n.$ Suppose  that $\F_{k-1}$ has been defined and  the radius of each ball in $\F_{k-1}$ is at least $2^{-t-2}\psi_{l-1}(Q_{k-1}),$  let
$$B_{k-1}=\prod_{i=1}^n B(x_i,r) \in \F_{k-1}.$$

 \textbullet \ Step 1: Apply lemma \ref{keylem}.
 It is easily seen that
 $$Q_k\ge \max \{10000 r^{-1}\log (r^{-1}),9Q_0\} .$$  Applying lemma \ref{keylem} to the ball in the $l$-th direction of $B_{k-1}$ and $Q_k,$ we obtain a collection of $\frac{1}{2Q_k^2}$-separated rectangles
 \begin{align*}
  R_{k,i}&=B(x_1,r)\times \cdots \times B(x_{l-1},r) \times \C_{t+1}\left(\frac{p_{k,i}}{q_{k,i}},\psi_{l}\right) \times  \cdots \times B(x_n,r).
 \end{align*}  Denote the collection of these rectangles by $R(k,B_{k-1})$ and the index set of these rectangles by $R_*(k,B_{k-1}).$ Since $\psi_l:\R_{>0}\rightarrow \R_{>0}$ is non-increasing, for each $i\in R_*(k,B_{k-1})$ we have
  $$ Q_k/9 \le q_{k,i}\le Q_k, \ \frac{\psi_{l}(Q_k)}{2^{t+2}}\le \left| \C_{t+1}\left(\frac{p_{k,i}}{q_{k,i}},\psi_{l}\right)\right|\le \frac{\psi_{l}(Q_k/9)}{2^{t+2}}   ,$$  and
  $$\# R(k,B_{k-1}) =\left \lfloor\frac{Q_k^2 |B_{k-1}|}{80} \right\rfloor .$$

\textbullet \ Step 2: Division.

Let $r_{k,i}=\frac{\psi_{l}(q_{k,i})}{2^{t+2}}.$
We can obtain a  collection of $5r_{k,i}$-separated balls
$$ B_{k,i}=B(z_1,r_{k,i})\times \cdots \times B(z_n,r_{k,i}),$$ by cutting the rectangle $R_{k,i}$ into balls of radius  $r_{k,i}$ and  Lemma \ref{5rcover}. Denote the collection of these balls by $A(k,B_{k-1},i)$ and their centers by $A_*(k,B_{k-1},i).$

Set
   $$ \F_k=\left\{\prod_{l=1}^nB(z_l,r_{k,i}): i\in R_*(k,B_{k-1}),\  \z \in A_*(k,B_{k-1},i),\ B_{k-1}\in \F_{k-1} \right\} .$$ Then, we define $F_k$ as follows:
$$F_{k}=\bigcup_{B\in \F_{k} } B =\bigcup_{B_{k-1}\in \F_{k-1}}\bigcup_{i\in  R_*(k,B_{k-1})} \bigcup_{\z \in A_*(k,B_{k-1},i) } \prod_{l=1}^nB(z_l,r_{k,i}).$$

 The desired Cantor subset is taken to be
   $$ F_{\infty} =\bigcap_{k\ge1}F_k. $$

Let $k=tn+l$ for some integer $t\ge0$ and $1\le l \le n.$
 By using the same method as in the first level, one can prove that $\F_k$
 satisfies the following properties.
   \begin{itemize}
   \item  For any two different  balls $B, B'\in \F_k,$  we have
         $$3B\cap 3B'=\emptyset .$$ In particular, for any two distinct rectangles $R_{k,i}$, $R_{k,j}$ in $R(k,B_{k-1})$,
        the balls  in the $l$-th direction of  $R_{k,i},$  $R_{k,j}$ are $\frac{1}{2Q_k^2}$-separated.
     \item The radius of any ball in $\F_k$ is at least $2^{-(t+2)}\psi_{l}(Q_k)$ and  at most $2^{-(t+2)}\psi_{l}(Q_k/9)< (2Q_k)^{-2}.$ Furthermore, the balls contained in $R_{k,i}$ have the same radius $r_{k,i}.$
     \item For any $B_{k-1}\in \F_{k-1}$ and $i\in R_*(k,B_{k-1}) ,$    $$ \# A(k,B_{k-1},i)\asymp\frac{|B_{k-1}|^{n-1}}{r_{k,i}^{n-1}} .$$ 

   \end{itemize}

\begin{pro}
One has
$$ F_{\infty}\subset E_1(\psi_1)\times  \cdots \times E_n(\psi_n) .$$

\end{pro}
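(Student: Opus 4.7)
The plan is to verify, for each $\x=(x_1,\ldots,x_n)\in F_\infty$ and each $l\in\{1,\ldots,n\}$, that $x_l\in E(\psi_l)$. Unpacking the definition of $E(\psi_l)$, what I need is: for every $\epsilon\in(0,1)$, there are infinitely many $(p,q)\in\Z\times\N$ with $(1-\epsilon)\psi_l(q)<|x_l-p/q|<\psi_l(q)$.

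The guiding observation is that the Cantor construction has been arranged so that the $l$-th coordinate is refined exactly at levels $k=tn+l$ ($t=0,1,2,\ldots$) using the parameter $c_{t+1}=1-2^{-(t+1)}$, and $c_{t+1}\uparrow 1$. Concretely, I would fix $l$ and $\epsilon\in(0,1)$, choose $t_0$ large enough that $c_{t+1}>1-\epsilon$ for all $t\ge t_0$, and then for each such $t$ trace $\x\in F_\infty\subset F_k$ down to the unique rectangle $R_{k,i}\in R(k,B_{k-1})$ containing it (the ball of $\F_k$ containing $\x$ comes from dividing exactly one such rectangle). By construction the $l$-th projection of $R_{k,i}$ equals $\C_{t+1}(p_{k,i}/q_{k,i},\psi_l)$, which immediately gives
\begin{equation*}
(1-\epsilon)\,\psi_l(q_{k,i})<c_{t+1}\,\psi_l(q_{k,i})<x_l-\frac{p_{k,i}}{q_{k,i}}<\psi_l(q_{k,i}).
\end{equation*}

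The only remaining step is to verify that letting $t$ range over integers $\ge t_0$ produces infinitely many distinct pairs $(p_{k,i},q_{k,i})$. This follows directly from the construction: $Q_k/9\le q_{k,i}\le Q_k$ and the sequence $(Q_k)$ is strictly increasing to infinity (by \eqref{c1}), so the denominators $q_{k,i}$ grow unboundedly with $t$, hence infinitely many distinct pairs arise. Consequently $x_l$ satisfies the defining condition of $E(\psi_l)$ for the chosen $\epsilon$; since $\epsilon\in(0,1)$ is arbitrary, $x_l\in E(\psi_l)$, and since $l$ is arbitrary, $\x\in E(\psi_1)\times\cdots\times E(\psi_n)$. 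There is no genuine obstacle here: the only thing that requires care is the bookkeeping that matches each level $k$ to its active direction $l$ and tightness parameter $c_{t+1}$ via $k=tn+l$, which is precisely how the construction was set up.
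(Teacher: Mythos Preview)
Your proof is correct and follows essentially the same approach as the paper: both arguments fix a coordinate $l$, consider the levels $k=tn+l$, use that the $l$-th projection of the rectangle $R_{k,i}$ containing $\x$ is $\C_{t+1}(p_{k,i}/q_{k,i},\psi_l)$, and conclude from $c_{t+1}\uparrow 1$ together with $Q_k\to\infty$ that $x_l\in E(\psi_l)$. The paper is slightly terser (it treats only $l=1$ and leaves the $\epsilon$ step implicit), but the logic is identical.
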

  \begin{proof}
   Let $\x=(x_1,\cdots,x_n)\in F_{\infty}.$ The statement will be proved once we prove $x_1 \in E(\psi_1).$ Set $\{k_j\}_{j\ge0}=\{jn+1\}_{j\ge0}.$ Since $\x \in F_{k_j},$ there exist $B_{k_j-1}=\prod_{i=1}^n B(x_i,r) \in \F_{k_j-1}$ and $R_{k_j,l} \in R(k_j,B_{k_j-1})$ such that
     $$\x \in R_{k_j,l}= C_{j+1}\left(\frac{p_{k_j,l}}{q_{k_j,l}},\psi_1\right) \times B(x_2,r) \times \cdots \times B(x_n,r)  .$$ Thus
     $$  c_{j+1}\psi_1(q_{k_j,l}) < \left| x_1 -\frac{p_{k_j,l}}{q_{k_j,l}}\right| < \psi_1(q_{k_j,l})  \ \text{and} \ \frac{Q_{k_j}}{9} \le q_{k_j,l}\le Q_{k_j} .$$  Since $\{c_j\}_{j\ge 1}$ and $\{Q_j\}_{j\ge 1}$ are strictly monotonically increasing, it yields $x_1\in E(\psi_1).$

  \end{proof}

\subsection{Mass distribution}

Now, we define a probability measure  $\mu$ on $F_{\infty}$ in a natural way.
Given a  rectangle $R_{k,i},$   we distribute the mass of $R_{k,i}$ equally among the balls $B_{k,i},$ since the balls $B_{k,i}$  in $R_{k,i}$  share the same radius; the big rectangles $R_{k,i}$ are disjoint, so their mass will be defined according to the number of elements.

{\em Measure of balls in $\F_1$.} For any $R_{1,i} \in R(1,[0,1]^n)$ and $B_{1,i} \in A(1,[0,1]^n,i ),$ we define
\begin{align*}
  \mu(R_{1,i}) &=\left\lfloor \frac{Q_1^2}{160} \right\rfloor^{-1}\asymp \frac{1}{Q_1^2}, \nonumber \\
  \mu(B_{1,i}) &= \frac{1}{\# A(1,[0,1]^n,i)} \cdot \mu(R_{1,i}) \nonumber\\ &\asymp |B_{1,i}|^{n-1}\cdot\mu(R_{1,i}) \asymp \frac{|B_{1,i}|^{n-1}}{Q_1^2} .
\end{align*}

{\em Measure of balls in $\F_k$.} Suppose that  $k=tn+l$ for some integer $t\ge 0,$ $1\leq l \leq n,$ and the measure of balls in $\F_{k-1}$  have been properly defined.
For any $B_{k,i} \in \F_k,$ letting $B_{k-1}\in \F_{k-1}$ be its  predecessor ball and $B_{k,i} \subset R_{k,i} ,$ we define
\begin{align*}
  \mu(R_{k,i}) &=\left \lfloor\frac{Q_k^2 |B_{k-1}|}{80} \right\rfloor^{-1 }\cdot\mu (B_{k-1}) \asymp\frac{\mu (B_{k-1})}{|B_{k-1}|\cdot Q_k^2},\\
  \mu(B_{k,i})&= \frac{1}{\# A(k,B_{k-1},i)} \cdot \mu(R_{k,i}) \\
  &\asymp  \frac{|B_{k,i}|^{n-1}}{|B_{k-1}|^{n-1}}\cdot \mu(R_{k,i})\asymp \frac{|B_{k,i}|^{n-1}}{Q_k^2}\cdot \frac{\mu(B_{k-1})}{|B_{k-1}|^n}.
\end{align*}

According to Kolmogorov's consistency theorem, the set function $\mu$ can be uniquely extended into a probability measure supported on $F_{\infty}$.

\subsection{Local dimension of $\mu$}

In this section, we will  compare the measure of a general ball with its radius. Subsequently, by applying the Mass Distribution Principle,  we can obtain the lower bound of $\hdim F_{\infty}.$

Recall that $\lambda_i= \liminf \limits_{x\rightarrow \infty} \frac{-\log \psi_i (x)}{ \log x} $ where $1\le i \le n.$
Write
$$ s =\min \left\{\frac{2}{\lambda_i}, \ 1\le i \le n\right\} =\min \{\hdim W(\psi_i), \ 1\le i \le n \}.$$

We begin by focusing on the ball in $\F_k.$ For each $1\leq l \leq n,$ write $\{k_t\}_{t\ge0}$ for the sequence $\{tn+l\}_{t\ge0}.$  Then, for each $1\le l \le n,$ by \eqref{c1} and \eqref{ms2}, we get that
\begin{align*}
  \liminf\limits_{t\rightarrow \infty} \frac{2\log Q_{k_t}}{(t+2)\log 2-\log \psi_{l}(Q_{k_t}) }&\ge \liminf\limits_{t\rightarrow \infty}\frac{2\log Q_{k_t}}{-\log \psi_{l}(Q_{k_t}) }\\
  &\ge \frac{2}{\lambda_{l}}-\epsilon. \nonumber
\end{align*}It follows that  there exists integer $k_0$ such that for any $k\ge k_0$ and any $B_{k,i}\in\F_k ,$
\begin{equation}\label{fieq}
  Q_k^{-2}\le |B_{k,i}|^{s-2\epsilon },
\end{equation}
since $|B_{k,i}|\ge2^{-t-2} \psi_l(Q_k) $ if $k=tn+l$ for some $t\ge0$ and $1\le l\le n.$ Hence, for any  $B_{k,i}$ with $k\ge k_0,$ we can assert that
\begin{equation}\label{mj1}\mu(B_{k,i})\ll {Q_k^{-2}}\cdot {|B_{k,i}|^{n-1}} \cdot \frac{\mu(B_{k-1})}{|B_{k-1}|^n} \le {Q_k^{-2}}\cdot {|B_{k,i}|^{n-1}} \le |B_{k,i}|^{n-1+s-2\epsilon}.\end{equation}



We can now  return to consider a general ball $B(\x,\delta)$  with $\x \in F_{\infty}$ and $\delta<\min\{|B_{k_0,i}|: B_{k_0,i}\in\F_{k_0}\}$. Let $k$ be the integer such that the ball $B(\x,\delta)$ can intersect only one ball in $\F_{k-1}$ but at least two balls in
$\F_{k}$. Suppose that $B_{k-1}=\prod_{i=1}^nB(z_i,r)$ is the unique ball in $\F_{k-1}$ which $B(\x,r)$ can intersect. Note that for any two different  balls $B, B'\in \F_k,$  we have
$$3B\cap 3B'=\emptyset .$$  Thus by the choice of $\delta$ and $k,$ we conclude that $k>k_0.$

Without loss of generality, we can assume that $\delta\leq r,$ since otherwise it is immediate by \eqref{mj1} that
$$\mu(B(\x,r))\leq \mu (B_{k-1})\ll r^{n-1+s-2\epsilon} \le\delta^{n-1+s-2\epsilon}.  $$
We only need to consider the following two cases.

{\em Case 1.} The ball $B(\x,\delta)$ can intersect at least  two rectangles $R_{k,m}, R_{k,n}$ in $B_{k-1}$ at the $k$-th level. Without loss of generality we can assume that
$$ R_{k,m} =\prod_{i=1}^{l-1}B(z_i,r) \times B(y_{l,m},  r_{k,m})\times \prod_{i=l+1}^n B(z_i,r),$$
$$ R_{k,n} =\prod_{i=1}^{l-1}B(z_i,r) \times B(y_{l,n},  r_{k,n})\times \prod_{i=l+1}^n B(z_i,r),$$
where $m,n\in R_*(k,B_{k-1})$  and balls $B(y_{l,m},  r_{k,m}),$  $B(y_{l,n},  r_{k,n})$ are $\frac{1}{2Q_k^2}$-separated. Furthermore, it is clear from the choice of $Q_k$ that
\begin{equation*}
   \max\{r_{k,m},r_{k,n}\}<\frac{1}{4Q_k^2}.
\end{equation*}Thus, it follows  that
 \begin{equation}\label{ieq} \delta \ge \frac{1}{4Q_k^2} > \max\{ r_{k,m},  r_{k,n}\}.\end{equation} So,
 $$B(y_{l,m}, r_{k,m})\subset B(x_{l},3\delta)\ \text{and} \ B(y_{l,m}, r_{k,n})\subset B(x_{l},3\delta),$$ where $B(x_{l},3\delta)$ is $3$-times scaling  of the  ball in the $l$-th direction of $B(\x,\delta).$
 Thus, we  conclude that
 $$\#\{R_{k,i} \subset B_{k-1}: R_{k,i}\cap  B(\x,\delta)\neq \emptyset \}\ll \delta \cdot Q_k^2. $$

Keep in mind that each ball $B_{k,m}$ in $R_{k,m}$ has same radius $r_{k,m}$ and for any two different  balls $B_{k,m}, B_{k,m}'\in R_{k,m},$  we have
         $$3B_{k,m}\cap 3B_{k,m}'=\emptyset .$$
 Hence, by \eqref{ieq}, we can assert that
 $$\#\{ B_{k,m}\subset R_{k,m}: B_{k,m} \cap B(\x,\delta) \neq \emptyset \} \ll \left(\frac{\delta}{r_{k,m}} \right)^{n-1} . $$

 Therefore, we get that
 \begin{align*}
   \mu(B(\x,\delta) \cap R_{k,m}) &\leq
   \sum_{B_{k,m}\subset R_{k,m}, B_{k,m}\cap B(\x,\delta)\neq \emptyset} \mu (B_{k,m})\\
   &\ll  \sum_{B_{k,m}\subset R_{k,m}, B_{k,m}\cap B(\x,\delta)\neq \emptyset}\frac{r_{k,m}^{n-1}} {Q_k^{2}} \cdot \frac{\mu(B_{k-1})}{|B_{k-1}|^n} \\
&\ll  \left(\frac{\delta}{r_{k,m}} \right)^{n-1}\cdot  \frac{r_{k,m}^{n-1}}{Q_k^{2}} \cdot \frac{\mu(B_{k-1})}{|B_{k-1}|^n}\\
   & =\frac{\delta^{n-1}}{Q_k^2}  \cdot \frac{\mu(B_{k-1})}{|B_{k-1}|^n}.
 \end{align*}
Thus,
 \begin{align*}
      \mu(B(\x,\delta) \cap B_{k-1}) &\leq \sum_{R_{k,i}\subset B_{k-1}, R_{k,i}\cap B(\x,\delta)\neq \emptyset} \mu (B(\x,\delta) \cap R_{k,i} )\\
      &\leq  \sum_{R_{k,i}\subset B_{k-1}, R_{k,i}\cap B(\x,\delta)\neq \emptyset}\frac{\delta^{n-1}}{Q_k^2}  \cdot \frac{\mu(B_{k-1})}{|B_{k-1}|^n}\\
      &\ll \delta \cdot Q_k^2 \cdot \frac{\delta^{n-1}}{Q_k^2}  \cdot \frac{\mu(B_{k-1})}{|B_{k-1}|^n}\\
   &= \delta ^{n} \cdot \frac{\mu(B_{k-1})}{|B_{k-1}|^n}.
 \end{align*}
 Finally, by \eqref{mj1} and $\delta \le r=|B_{k-1}|$,
 $$ \mu (B(\x,\delta) ) \leq  \delta ^{n} \cdot {r^{-n}} \cdot {r^{n-1+s-2\epsilon}}\le \delta ^{n-1+s-2\epsilon}.$$

 {\em Case 2.} The ball $B(\x,\delta)$ can only intersect one  rectangle $$R_{k,m}=\prod_{i=1}^{l-1}B(z_i,r) \times B(y_{l,m},  r_{k,m})\times \prod_{i=l+1}^n B(z_i,r)$$ of $B_{k-1}$ in the $k$-th level. Recall that the balls in $R_{k,m}$ are of the same radius $r_{k,m}$ and for  any two different  balls $B_{k,m}, B_{k,m}'\in R_{k,m},$  we have
         $$3B_{k,m}\cap 3B_{k,m}'=\emptyset .$$
Since $B(\x,\delta)$ can intersect two  balls in $\F_{k-1},$ thus
\begin{equation}\label{ieq3} \delta> r_{k,m}.\end{equation}

 Hence, with the same estimation as in  case 1, we conclude that

 \begin{equation*}
   \mu(B(\x,\delta))=\mu(B(\x,\delta)\cap R_{k,m})\ll \frac{\delta^{n-1}}{Q_k^2}  \cdot \frac{\mu(B_{k-1})}{|B_{k-1}|^n}\\
   \leq    \frac{\delta^{n-1}}{Q_k^2}.
 \end{equation*} Combining  \eqref{fieq} with \eqref{ieq3}, we conclude that
 $$\mu(B(\x,\delta)) \le \delta^{n-1}\cdot r_{k,m}^{s-2\epsilon} \leq \delta^{n-1+s-2\epsilon} .$$

 Applying the Mass Distribution Principle (Proposition \ref{p1}),  we get that
 $$ \hdim E(\psi_1) \times \cdots \times E(\psi_n) \ge \hdim F_\infty \ge n-1+s-2\epsilon.$$ By the arbitrariness of $\epsilon,$ we conclude that
 $$ \hdim E(\psi_1) \times \cdots \times E(\psi_n) \ge n-1+s.$$ The proof of Theorem \ref{mainthm} is complete.


\section*{References}

{\small}

\end{document}